\newcommand{\R}{\mathbb{R}}
\newcommand{\Ss}{\mathbb{S}}
\renewcommand{\L}{\mathcal{L}}
\newcommand{\pa}{\partial}
\newcommand{\rmd}{{\rm d}}
\newcommand{\be}{\begin{equation}}
\newcommand{\ee}{\end{equation}} 
\let\div\relax 
\DeclareMathOperator{\curl}{curl}
\DeclareMathOperator{\div}{div}
\numberwithin{equation}{section}
\newtheorem{theorem}{Theorem}
\numberwithin{theorem}{section}
\newtheorem{defi}{Definition}
\newtheorem{prop}{Proposition}
\numberwithin{prop}{section}
\numberwithin{cor}{section}
\newtheorem{lemma}{Lemma}
\numberwithin{lemma}{section}
\theoremstyle{remark}
\newtheorem*{remark}{Remark}
\title{On quasisymmetric plasma equilibria sustained by small force}
\author{Peter Constantin} \address{Department of Mathematics, Princeton University, Princeton, NJ 08544}\email{const@math.princeton.edu}
\author{Theodore D. Drivas} \address{Department of Mathematics, Princeton University, Princeton, NJ 08544}\email{tdrivas@math.princeton.edu}
\author{Daniel Ginsberg}\address{Program in Applied and Computational Mathematics, Princeton University, Princeton, NJ 08544}\email{ dg42@math.princeton.edu}
\begin{document}
\begin{abstract}
  We construct smooth, non-symmetric plasma equilibria which possess closed, nested flux surfaces and solve the Magnetohydrostatic (steady three-dimensional incompressible Euler) equations with a small force. The solutions are also `nearly' quasisymmetric. The primary idea is, given a desired quasisymmetry direction $\xi$, to change the smooth structure on space so that the vector field $\xi$ is Killing for the new metric and construct $\xi$--symmetric solutions of the Magnetohydrostatic equations on  that background by solving a generalized Grad-Shafranov equation.  If $\xi$ is close to a symmetry of Euclidean space, then these are solutions on flat space up to a small forcing.
\end{abstract}

\date{\today}
\maketitle

\section{Introduction}

Let $T\subset \R^3$ be a domain with smooth boundary. The three-dimensional Magnetohydrostatic (MHS) equations on $T$ read
\begin{alignat}{2}
J \times B &= \nabla P+ f, &&\quad \text{ in } T, \label{steadymhd}\\
 \nabla \cdot B &= 0, &&\quad \text{ in } T,\label{divB0}\\
 B\cdot \hat{n} &= 0, &&\quad \text{ on } \partial T,\label{Bbdy}
\end{alignat}
where $J= \nabla\times B$ is the current,  $f$ is an external force and $P$ is the
pressure. The solution $B$ to \eqref{steadymhd}--\eqref{Bbdy} can be interpreted as either a stationary fluid velocity field which solves the time-independent Euler equation, or as a steady self-supporting magnetic field in a continuous medium with trivial flow velocity.  The latter interpretation is robust across a variety of magnetohydrodynamic models (e.g. compressible, incompressible, non-ideal) and makes the system \eqref{steadymhd}--\eqref{Bbdy}
central to the study of plasma confinement fusion.

In view of this, there is a long standing scientific program to identify and construct magnetohydrostatic equilibria which are effective at confining ions during a nuclear fusion reaction. The most basic requirement for confinement is the existence of
a ``flux function'' $\psi$, whose level sets foliate the domain $T$ and which
satisfies $B\cdot \nabla \psi = 0$. To first approximation, ions move
along the integral curves of $B$ and so this condition ensures that particle
trajectories are
approximately constrained to the level sets of $\psi$.  For this reason, it is desirable to seek equilibria with \textit{nested flux surfaces} (isosurfaces of $\psi$) which foliate the plasma domain.
When $T$ is the axisymmetric torus,
it is natural to look for such solutions in the form of axisymmetric
magnetic fields.  If $(R, \Phi, Z)$ denote the usual cylindrical coordinates on $\R^3$ and the
center line of the
torus lies in the $Z = 0$ plane, axisymmetric solutions take the form
\begin{equation}
 B_0 = \frac{1}{R^2} \bigg(C_0(\psi_0) Re_\Phi + Re_\Phi \times \nabla \psi_0\bigg),
 \label{axiB}
\end{equation}
with flux function $\psi_0$.
In order for $B_0$ to satisfy \eqref{steadymhd} with $P_0 = P_0(\psi_0)$, taking $f = 0$ momentarily for simplicity and taking
$T$ to be the torus with inner radius $R_0 -1$ and outer radius $R_0 + 1$,
say,
the flux function needs to satisfy
the axisymmetric Grad-Shafranov equation (\cite{G67,S66})
\begin{alignat}{2}
  \pa_R^2 \psi_0 + \pa_Z^2\psi_0 - \frac{1}{R}\pa_R \psi_0
  + R^2 P_0'(\psi_0) + C_0C_0'(\psi_0) &= 0, && \qquad \text{ in } D_0,
 \label{axigs}
 \\
 \psi_0 &= \textrm{const. }
 && \qquad \text{ on } \pa D_0,
 \label{axibc}
\end{alignat}
where $D_0$ denotes the cross-section of the torus (unit disk) in the $\Phi = 0$
half-plane centered at $R = R_0$.
Conversely, if $\psi_0$ is any solution\footnote{We remark that it could be that this equation admits ``large" solutions with non-trivial dependence on $\Phi$.  See the work of Garabedian  \cite{G06}.} to \eqref{axigs} with
$e_\Phi\cdot\nabla \psi_0 = 0$ then the vector field $B_0$ defined in \eqref{axiB}
is divergence-free and satisfies \eqref{steadymhd}. If $\psi_0$ is constant on $\pa T$, $B_0$ satisfies \eqref{Bbdy}.

Unfortunately, these tokamak equilibria come with a slew of problems from the point of view of plasma confinement fusion \cite{L19}.  For example, to achieve
improved confinement it is desirable for the magnetic field to `twist' as
it wraps around the torus and this can only be accomplished in axisymmetry
with a large plasma current, $J$. Such plasma configurations are hard to control
in practice.
One approach to finding equilibria with better confinement properties is to
consider equilibria in geometries which have the desired twist built in.
This is the basic design principle behind the stellarator, \cite{GB91}. It is still desirable
for these configurations to possess a form of symmetry, which is known
as quasisymmetry.

\begin{defi} [Weak quasisymmetry, \cite{RHB20}]  \label{qsdef}
  Let $\xi$ be a non-vanishing vector field tangent to $\partial T$.
  We say that $\xi$ is a \textit{quasisymmetry}  and the
  field $B$ is \textit{quasisymmetric} with respect to $\xi$ if
  \begin{alignat}{2}
     \div \xi & =0,&&\quad \text{ in } T,\label{qs1intro}\\
     B \times \xi&=-\nabla \psi,&&\quad \text{ in } T,\label{qs2intro}\\
  \xi \cdot  \nabla |B|&=0, && \quad \text{ in } T,
     \label{qs3intro}
\end{alignat}
for some function $\psi: T\to \mathbb{R}$.
\end{defi}
The significance of the condition \eqref{qs2intro} is
that it implies $B\cdot \nabla \psi = 0$
and $\xi \cdot \nabla \psi = 0$ and so quasisymmetric solutions posses flux
functions which are symmetric with respect to $\xi$. In \cite{RHB20}, the authors
 argue that \eqref{qs1intro}--\eqref{qs3intro} form sufficient conditions
 that ensure first order (in gyroradius) particle confinement, hence the terminology of \textit{weak} quasisymmetry.
 In the confinement fusion literature \cite{L19,BKM19}, one encounters the following alternative
definition which is actually stronger than the above.  It replaces \eqref{qs3intro} with
\be\label{sqs3intro}
\xi \times J= \nabla (B \cdot \xi)\quad \textit{ in } T.
\ee
We term this set of conditions \textit{strong} quasisymmetry.
When $f=0$ it is this stronger form of quasisymmetry which is equivalent to other definitions in the plasma fusion literature involving Boozer angles, see \S 8 of \cite{L19}.
If $\div B = 0$ then \eqref{qs3intro} requires only that  a single component of \eqref{sqs3intro} vanish, $B\cdot \big( \xi \times J
- \nabla (B\cdot \xi)\big) = 0$.\footnote{To see this, using standard vector calculus identities, we write
\be
\xi \times \curl B - \nabla (B\cdot \xi) = B\cdot \nabla \xi + \xi \cdot \nabla B + B\times \curl \xi.
\ee
Taking the inner product with $B$  results in $B \cdot (\xi \times \curl B - \nabla (B\cdot \xi) )= \frac{1}{2} \xi \cdot \nabla |B|^2+ B\cdot \nabla \xi \cdot B$.
The argument is completed by using the elementary
identity $\L_\xi B = \curl(B\times \xi) + \div B \xi - \div \xi B
= \curl (B\times \xi)$.
This yields $B \cdot (\xi \times \curl B - \nabla (B\cdot \xi) )=  \xi \cdot \nabla |B|^2$.
}
In light of this, the additional content of strong quasisymmetry \eqref{qs1intro}, \eqref{qs2intro}, and \eqref{sqs3intro} is the assumption  that the other two components of $\xi \times J
- \nabla (B\cdot \xi)$ vanish.  It turns out that when there is no force and the equilibria are toroidal, strong quasisymmetry is equivalent to Definition \ref{qsdef}.\footnote{M. Landreman, private communication.}

From \eqref{qs2intro}, if $\xi\cdot B$ is constant
on surfaces of constant $\psi$, $\xi \cdot B = C(\psi)$ (by a result in \cite{BKM19},
any solution of \eqref{steadymhd} with $f = 0$ which satisfies \eqref{qs1intro}-\eqref{qs3intro} satisfies this condition),
it follows that $B$ is of the form
\begin{equation}
 B = \frac{1}{|\xi|^2} \bigg(C(\psi) \xi + \xi \times \nabla \psi\bigg),
 \label{Bqs}
\end{equation}
and when $f = 0$, the requirement that
\eqref{steadymhd} holds implies that
$\psi$ must satisfy the quasisymmetric Grad-Shafranov
equation (introduced in \cite{BKM19}) which reads
\begin{align}
\Delta \psi - \frac{\xi \times \curl \xi}{|\xi|^2} \cdot \nabla \psi
 + \frac{\xi \cdot  \curl \xi}{|\xi|^2}  C(\psi) +  CC'(\psi)  + |\xi|^2P'(\psi)&=0, \qquad \qquad \text{in}  \ T,  \label{qgs1}\\
 \psi &= {\rm const. }  \ \ \ \ \ \  \ \text{on}  \ \partial T.
 \label{qgs}
\end{align}

The equations
\eqref{divB0} and \eqref{qs1intro}, \eqref{qs3intro} can be thought of
as constraints relating $\psi$ to the deformation
tensor of $\xi$, the symmetric two-tensor $\L_\xi \delta$ defined by
\begin{equation}
 (\L_\xi\delta)(X, Y) = \nabla_X \xi \cdot Y + \nabla_Y \xi \cdot X,
 \label{}
\end{equation}
where $\nabla$ denotes covariant differentiation with respect to the Euclidean
metric. Recall that $\xi$ generates an isometry of Euclidean space
if and only if $\L_\xi \delta= 0$,
in which case $\xi$ is called a Killing field for
the metric $\delta$.  Assuming
that $\xi\cdot \nabla \psi = 0$ and $\div \xi = 0$, from \eqref{Bqs} we find
\begin{equation}
 \div B = C(\psi) (\L_\xi\delta)(\xi, \xi) + (\L_\xi \delta)(\xi, \xi\times
 \nabla \psi),
 \label{divBxi}
\end{equation}
and expanding the condition \eqref{qs3intro} we find
\begin{equation}
 \frac{1}{2}\L_\xi |B|^2 =
 (\L_\xi\delta)(\xi, \xi) + \frac{2}{C(\psi)}
  (\L_\xi\delta)(\xi, \xi \times \nabla \psi)
   + \frac{1}{C(\psi)^2} (\L_\xi\delta)(\xi\times \nabla \psi,
 \xi \times \nabla \psi),
 \label{qsdefo}
\end{equation}
see Lemma \ref{propBLem} of Appendix \ref{structural}.
The equation \eqref{qsdefo} is a complicated relationship between
$\psi, C(\psi)$ and $\xi$ but notice that it holds trivially
(assuming only that $\xi\cdot \nabla \psi = 0$) whenever
when $\xi$ is a Killing field.
It is well-known that in Euclidean space the only Killing fields are
linear combinations of translations and rotations. Therefore, up to
a multiplicative constant, the only such field compatible with the geometry of the axisymmetric
torus is $\xi = R e_\Phi$ and as mentioned above, such solutions
have problematic confinement properties. We have arrived at the following problem.
\vspace{2mm}

\noindent
\textbf{Problem:}\textit{
Given a toroidal domain $T$, construct a function $\psi:T\to \R$ with nested flux surfaces and a
divergence-free
vector field $\xi$ which does not
generate an isometry of $\R^3$ and is tangent to $\pa T$,
so that
\eqref{qgs1}, \eqref{qs3intro}, the nonlinear constraints \eqref{divBxi}, \eqref{qsdefo} and
 $\xi\cdot \nabla \psi = 0$ all hold.}
\vspace{2mm}

It is not clear that there are any smooth solutions
$\psi, \xi$ to the above problem. In fact, in
1967 (long before the above notion of quasisymmetry was introduced),
Grad \cite{GR58,G67,G85} conjectured that the only smooth solutions
to \eqref{steadymhd}--\eqref{Bbdy} possessing a good flux function have a Euclidean symmetry
\footnote{Specifically, in \cite{G67} Grad conjectures that there no \textit{families} of smooth
solutions to \eqref{steadymhd}-\eqref{Bbdy}, each posessing a flux function with closed
level sets that foliate the domain $T$, other than the axisymmetric solutions. This leaves open the
possibility of isolated non-axisymmetric steady states, far from symmetry.},
and this would in particular rule out any solutions of the above type.
 Since Grad's work, there have been some constructions of non-symmetric equilibria an infinite cylindrical domains \cite{SK95,SK97}.  As these are unbounded in extent, they have limit practical appeal for the perspective  of confinement. No such examples of smooth solutions have been rigorously  demonstrated on toroidal domains, although there has been some work on suggestive  formal near-axis expansions \cite{W14,BMT86,JSL19} and non-symmetric weak solution equilibria with pressure jumps have been rigorously constructed \cite{Bl69} which may have practical implication for the confinement fusion program \cite{H1,H2}.\footnote{See Lortz \cite{L70} for a construction of a non-axisymmetric toroidal equilibrium which nevertheless enjoys plane reflection symmetry (forcing all magnetic field lines to be closed).}

We do not address Grad's conjecture here and our goal is instead to present a robust
method for constructing solutions to \eqref{steadymhd} with small force and which
are approximately quasisymmetric with respect to a given vector field $\xi$
(sufficiently close to the axisymmetric vector field $\xi_0 = R e_\Phi$),
in the sense that \eqref{qs2intro} holds but
that \eqref{qs3intro} holds up to a small error.

In addition to the nontrivial constraint \eqref{qs3intro}, there are two
serious difficulties in constructing solutions to \eqref{steadymhd}--\eqref{Bbdy}
of the form \eqref{Bqs}
with given symmetry direction $\xi$. The first is that by \eqref{divBxi}, unlike
in the axisymmetric setting, vector fields of the form \eqref{Bqs}
need not be divergence-free. The second difficulty is that
for arbitrary $\xi$, it is not at all clear that
the equation \eqref{qgs1}--\eqref{qgs} admits any solutions with
$\xi\cdot \nabla \psi = 0$, since the coefficients appearing in
\eqref{qgs1}--\eqref{qgs} need not be invariant under $\xi$. Both of these difficulties
can be traced to the fact that $\xi$ need not be a Killing field with
respect to the Euclidean metric. To circumvent these issues, inspired by \cite{LMP19} and \cite{Burby},
we replace the metric structure of $(\R^3, \delta)$ with $(\R^3, g)$
for a metric $g$ for which $\xi$ is a Killing field. The resulting
magnetic field will not satisfy the usual MHS equations \eqref{steadymhd}
but provided $\xi$ is sufficiently close to Killing for the Euclidean
metric, the error will be small.
We now explain the idea.

Let us suppose that given $\xi$, we can find a metric $g$ on $\R^3$
for which $\L_\xi g= 0$, that is, for which $\xi$ generates an isometry
(we give an explicit construction of such metrics for a large class
of vector fields $\xi$ after the upcoming statement of
Theorem \ref{noqsthmvague}).
We then consider the following generalization of the
ansatz \eqref{Bqs}, introduced in \cite{Burby}
\begin{equation}
 B_g = \frac{1}{|\xi|_g^2} \bigg(
 C(\psi)\xi +  \sqrt{|g|} \xi\times_g \nabla_g \psi\bigg).
 \label{gBqs}
\end{equation}
Here, $|\xi|_g, \times_g, \nabla_g$ denote the analogs of the usual
Euclidean quantities $|\xi|,\times \nabla$ with respect to the metric
$g$ (see Appendix \ref{notation}).
In Lemma \ref{propBlem1} we use the fact that $\L_\xi g = 0$
to show that vector fields of this form
are divergence free assuming only that $\xi\cdot \nabla \psi = 0$,
\begin{equation}
 \div B_g = 0,
 \label{divBg}
\end{equation}
and also that $\psi$ is a flux function for $B_g$,
\begin{equation}
  \xi \times \nabla \psi = B_g.
 \label{xiflux}
\end{equation}
We emphasize the somewhat surprising fact that even though the definition
\eqref{gBqs} involves the metric $g$ in a nontrivial way, it is designed that way so that the identities
\eqref{divBg}-\eqref{xiflux} involve only Euclidean quantities.  We remark that $B_g$ will \text{not} be divergence-free with respect to the $g$ metric.

We then seek $B_g$ of the form \eqref{gBqs}
which satisfy the MHS with respect to the metric $g$,
\begin{equation}
 \curl_g B_g \times_g B_g = \nabla_g P.
 \label{caMHD}
\end{equation}
This ansatz leads to the generalized Grad-Shafranov equation for $\psi$
\begin{align}
  \div_g\bigg(\sqrt{|g|} \frac{\nabla_g \psi}{|\xi|_g^2} \bigg) - C(\psi)
 \frac{\xi}{|\xi|_g^2} \cdot_g \curl_g
 \left(\frac{\xi}{|\xi|_g^2}\right)
+ \frac{C(\psi)C'(\psi)}{\sqrt{|g|}|\xi|_g^2}+
\frac{P'(\psi)}{\sqrt{|g|}}
&= 0, \qquad \qquad \text{in}  \ T,  \label{gGS}\\
 \psi &= {\rm (const.)}  ,  \ \ \  \text{on}  \ \partial T. \label{gGSbc}
\end{align}
where $|\xi|_g, \cdot_g, \curl_g$ denote the magnitude, dot product and curl
with respect to the metric $g$ (see Appendix \ref{notation} for the definitions
and Appendix \ref{appendproof} for the derivation of \eqref{gGS} from
\eqref{gBqs} and \eqref{caMHD}).  Note that \eqref{gGS}--\eqref{gGSbc}  reduces to \eqref{qgs1}--\eqref{qgs} when $g=\delta$, and when $g$ is the circle-averaged metric, it agrees with
the equation derived in \cite{Burby}. As shown in \cite{Burby}, all solutions
of MHS \eqref{steadymhd}-\eqref{Bbdy} without force and non-vanishing pressure
gradient must have a flux function satisfying \eqref{gGS} where $g$
is the circle-averaged metric discussed below. In light of this, the study
of the generalized Grad-Shafranov equation \eqref{gGS} is of fundamental importance
in the study of solutions to MHS with a generalized symmetry.

As another consequence of the fact that $\L_\xi g= 0$,
the coefficients in the equation \eqref{gGS}
are invariant under $\xi$ and so \eqref{gGS}, unlike \eqref{qgs}, is consistent with the requirement
$\xi\cdot \nabla \psi = 0$.
The downside is that the equation \eqref{caMHD} does not agree with
\eqref{steadymhd} unless $g = \delta$ and so $B_g$ will
not satisfy the original MHS equations. However,
if we can arrange for the metric $g$
to be sufficiently close to the Euclidean
metric $\delta$, then $B_g$ will satisfy the usual MHS equations
 $\curl B_g \times B_g - \nabla P = 0$ up to a small
 error.
Our approach will be to solve the generalized Grad-Shafranov
equation \eqref{gGS} by deforming an appropriate solution $\psi_0$
of the axisymmetric Grad-Shafranov equation \eqref{axigs},
using the methods from \cite{CDG20}. 	In particular, we seek a diffeomorphism $\gamma: D_0 \to D$ and requiring that $\psi = \psi_0\circ \gamma^{-1}$. It turns out (see section \ref{mainthmpf}) that this reduces to a system of nonlinear elliptic equations for the components of $\gamma$ which can be solved by a iteration.

In what follows, $T_0$ denotes the axisymmetric torus
\begin{equation}
 T_0 = \{ (R,\Phi, Z)\ |\ (R - R_0)^2 + Z^2 \leq a, 0\leq \Phi \leq 2\pi\},
 \label{}
\end{equation}
with thickness $0 < a \ll R_0$. Let $\xi_0 = R e_\Phi$ be the generator of rotations in the $Z = 0$ plane.
Let $D$ be any domain in
the half-plane $\{\Phi = 0\}$ sufficiently close to $D_0$.
Suppose that $\xi$ is a vector field which is
sufficiently close to the rotation field $\xi_0$
with the property that all the orbits of $\xi$ starting
from $D$ are periodic (with possibly different period
$\tau(p)$).
In this case we define the toroidal domain
\begin{equation}
 T =
 \{\varphi_s(p) \ | \  p\in D, \ \ s\in [0, \tau(p))\},
 \label{torusdef}
\end{equation}
where $\varphi_s(p)$ denotes the time-$s$ flow of $\xi$
starting from $p \in D$,
\begin{equation}
 \frac{\rmd }{\rmd s} {\varphi}_s(p) = \xi(\varphi_s(p)), \qquad
 \varphi_0(p) = p \in D.
 \label{}
\end{equation}
In this setting we say that the toroidal domain $T$ is \textit{swept out
by $\xi$ from D}.

\begin{figure}[h!]
  \includegraphics[height=2.1in]{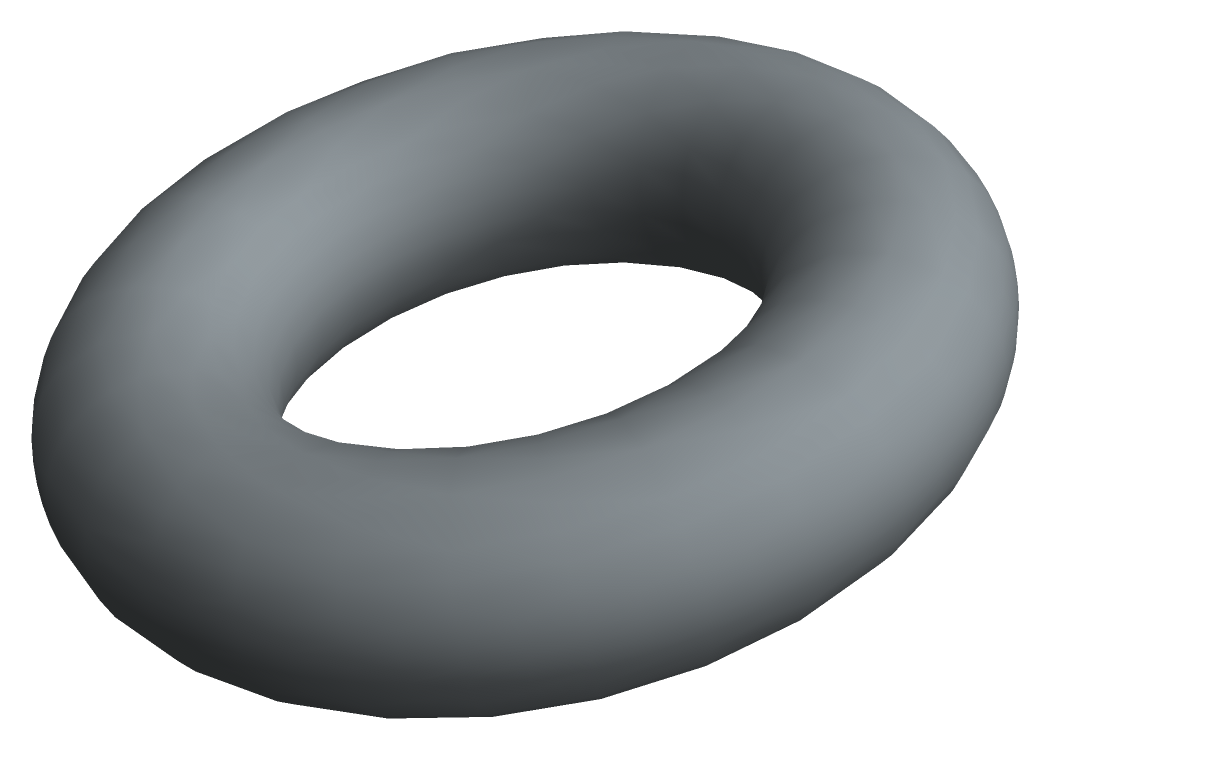}
  \includegraphics[height=2.1in]{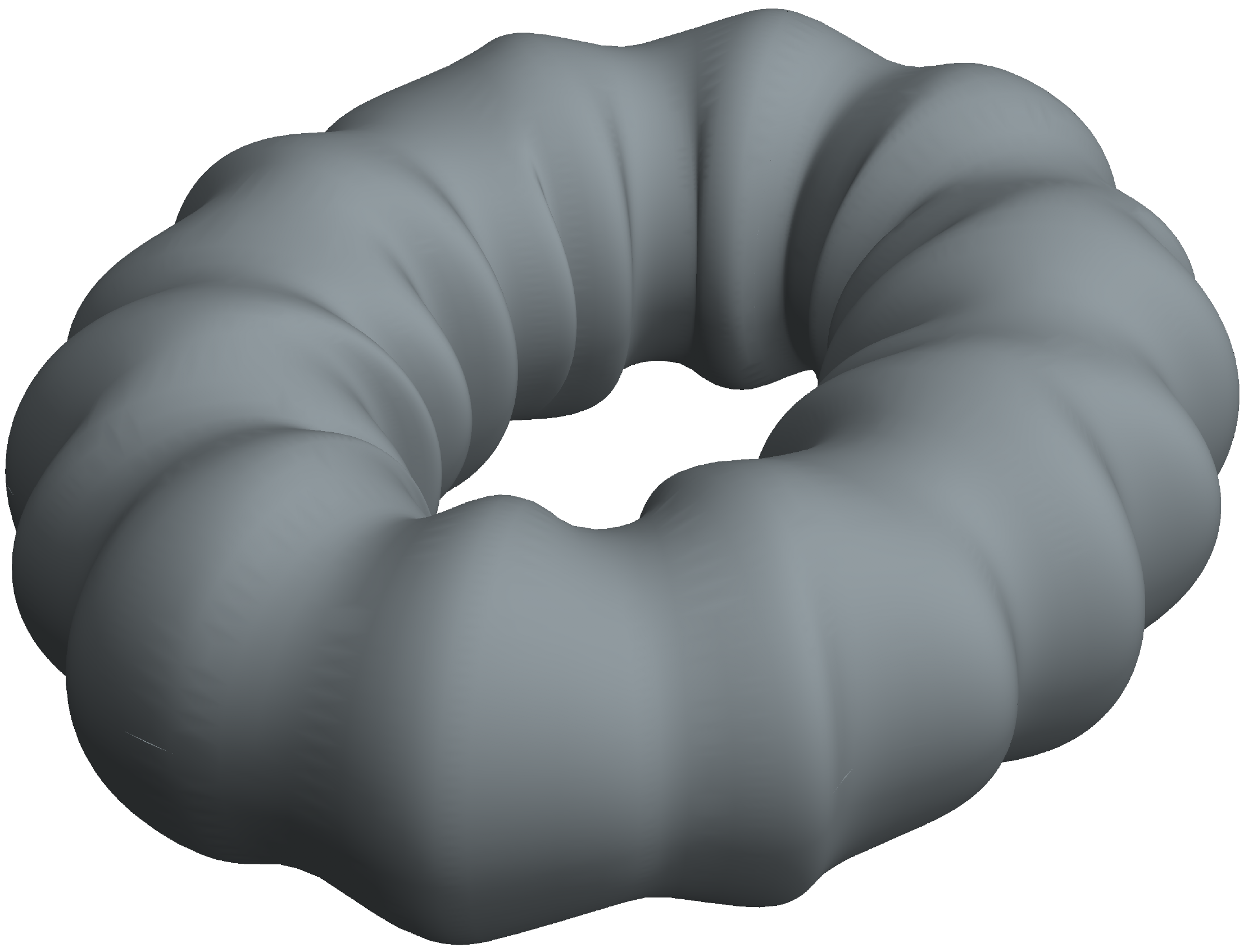}
  \caption{Tokamak and stellarator geometries.}
  \label{fig:stell}
\end{figure}

Our first result is that, given a toroidal domain $T$ swept out by
a vector field $\xi$ as above,
sufficiently close to the axisymmetric torus $T_0$, we can find a flux function
satisfying the generalized Grad-Shafranov equation \eqref{gGS}.
The proof is constructive and relies on deforming a known axisymmetric steady state satisfying mild conditions {\rm (\textbf{H1})}--{\rm (\textbf{H2})} stated in
 \S \ref{mainthmpf}.

\begin{theorem}
  \label{noqsthmGS}
  Fix $k \geq 0, \alpha > 0$  and let $\xi \in C^{k+2,\alpha}(\R^3)$ be a divergence-free vector field,
  sufficiently close in $C^{k+2,\alpha}$ to the
  rotation vector field $\xi_0 = R e_\Phi$.
  Let $D$ be a domain sufficiently close
  to $D_0$ in $C^{k+2,\alpha}$
  in the sense that $D =  \{(r, \theta) \  |\  0 \leq r \leq \mathfrak{b}(\theta), \theta
  \in \Ss^1\}$ for a function $\mathfrak{b}:\Ss^1 \to \R$ sufficiently close to 1
  in $C^{k+2,\alpha}(\Ss^1)$. Let $\psi_0 \in C^{k+2,\alpha}(D_0)$
  be a solution of \eqref{axigs}-\eqref{axibc} with pressure
  $P_0 \in C^{k+1,\alpha}(\R)$ and with $C_0 \in C^{k+1,\alpha}(\R)$
  satisfying {\rm (\textbf{H1})}--{\rm (\textbf{H2})}.

  Suppose moreover that
   $\xi$ has closed integral curves that sweep out a toroidal domain
  $T$ from $D$. Suppose
  that there is a metric $g\in C^{k+2,\alpha}(\mathbb{R}^3)$ with the property $\L_\xi g= 0$
  which is sufficiently close to the Euclidean metric.
  Then, for any given $C\in C^{k+1,\alpha}(\R)$ sufficiently close to $C_0$, there is a flux
 function $\psi \in C^{k+2,\alpha}(T)$, and a pressure
 $P = P(\psi) \in C^{k+1,\alpha}(T)$ so that $\psi$ satisfies the
 generalized Grad-Shafranov equation \eqref{gGS} and the boundary condition
 \eqref{gGSbc}. Moreover, the level sets of $\psi$ are diffeomorphic to
 the level sets of $\psi_0$.
\end{theorem}

As a consequence of the above theorem, we are able to produce magnetic fields with nested flux surfaces and a global symmetry that solve MHS up to a small force whose magnitude is controlled by the deviation of the symmetry from being Euclidean.  These fields satisfy two of the three quasisymmetry conditions, the third holding approximately. The resulting magnetic field possesses flux surfaces which have the same topology as the axisymmetric base state.
\begin{theorem}
  \label{noqsthmvague}
  Suppose the hypotheses of the previous theorem hold.
  For any given $C\in C^{k+1,\alpha}(\R)$ sufficiently close to $C_0$, there is a flux
 function $\psi \in C^{k+2,\alpha}(T)$, and a pressure
 $P = P(\psi) \in C^{k+1,\alpha}(T)$ so that the magnetic field $B$ defined by \eqref{gBqs} satisfies
 $B\cdot \nabla\psi = 0$, $B\times \xi  = \nabla \psi$ as well as  MHS \eqref{steadymhd}--\eqref{Bbdy} with a force $f$ obeying
 \begin{equation}
  \|f\|_{C^{k,\alpha}(T)} \leq c \|\delta - g\|_{C^{k+2,\alpha}(T)},
  \label{mhdwforcebd}
 \end{equation}
 where $c:= c(\|\xi\|_{C^{k+2,\alpha}(T)},\| \psi\|_{C^{k+2,\alpha}(T)},\| P_0\|_{C^{k+1,\alpha}(\mathbb{R})},\| C_0\|_{C^{k+1,\alpha}(\mathbb{R})})$.
 Moreover, the  flux surfaces of $B$ (isosurfaces of $\psi$) are diffeomorphic to the  isosurfaces of $\psi_0$,
 and $\psi$ is a solution of the generalized Grad-Shafranov equation \eqref{gGS}.
\end{theorem}

The point of the bound \eqref{mhdwforcebd} is that
if $\xi$ is a Killing field for the Euclidean metric $\delta$
then we can take $g = \delta$ in the above and by \eqref{mhdwforcebd},
the $B$ is then an exact solution of the MHS equations
\eqref{steadymhd} with $f = 0$. In this sense, \eqref{mhdwforcebd} shows that one
can construct approximate solutions to MHS with symmetry direction $\xi$
with error proportional to how far $\xi$ is from being a symmetry of
$\R^3$. We remark that the proof is quantitative in that all the small parameters can be explicitly defined in terms of the inputs $\psi_0$, $C_0$, $P_0$, and $\xi$.
Let us also remark that one is not free to choose $P$ from the outset
and it is instead determined in the course of the proof to enforce
a certain compatibility condition, see section \ref{mainthmpf}.

The above theorem is perturbative, in the sense that the resulting magnetic
field
will be approximately axisymmetric and have a flux function close to a
given $\psi_0$ satisfying the axisymmetric Grad-Shafranov equation \eqref{axigs}.
As will be discussed in the upcoming section, the result follows from a theorem
in \cite{CDG20} by deforming
the given solution $\psi_0$ of the axisymmetric Grad-Shafranov equation
\eqref{axigs} into a solution of the generalized
Grad-Shafranov equation \eqref{gGS}. {The same theorem from
\cite{CDG20} in fact allows one to deform a given solution $\psi_1$ to
\eqref{gGS} for given $\xi = \xi_1$ into a solution $\psi_2$ to \eqref{gGS} with nearby,
but different, $\xi = \xi_2$. Given a desired $\xi$, if one can produce a
sequence of vector fields $\xi_0, \xi_1,..., \xi_{N-1}, \xi_N = \xi$ in such a way
that the resulting solutions $\psi_0, \psi_1,..., \psi_{N-1}$ all satisfy the
conditions \textbf{(H1)}--\textbf{(H2)} this would produce a flux function
satisfying \eqref{gGS} far from axisymmetry. Note however that the resulting
force in \eqref{steadymhd} could be quite large.}

{
If one is only interested in constructing approximate equilibria, this
can be achieved simply by pushing forward a given axisymmetric state by a volume
preserving diffeomorphishm. The resulting flux function need not satisfy
the generalized Grad-Shafranov equation \eqref{gGS}.
On the other hand, the construction in Theorem \ref{noqsthmGS} does ensure that the
generalized Grad-Shafranov equation is exactly satisfied. In light of the fact that
\cite{Burby} shows that all unforced
solutions to \eqref{steadymhd}-\eqref{Bbdy} must satisfy the equation \eqref{gGS}, our theorem
may provide a path towards obtaining non-axisymmetric solutions without force.}

We now describe how to produce a  base state $\psi_0$ and metric $g$ which are suitable inputs for Theorem \ref{noqsthmGS}.  In Appendix  \S \ref{examplesec}, we provide an example of a base state $\psi_0$ satisfying {\rm (\textbf{H1})}--{\rm (\textbf{H2})} living on a large aspect-ratio torus. This is obtained as a perturbation of an explicit profile on an ``infinite aspect-ratio" torus. It should be stressed that the conditions {\rm (\textbf{H1})}--{\rm (\textbf{H2})} are not very stringent and should hold for a wide class of axisymmetric solutions that possess simple nested flux surfaces (which could e.g. be numerically obtained).  Next, we describe two large classes of vector fields $\xi$
and metrics $g$ satisfying the hypotheses of our theorem.
\begin{remark}[Designer metrics]
We provide two possible ways of constructing a `near' Euclidean metric given a `near' isometry $\xi$.
\begin{enumerate}
\item  \textbf{(Deformed metric)}: Suppose that the torus $T$
  is given by $T = f(T_0)$ where $f$ is a diffeomorphism
  defined in a neighborhood of $T_0$ and which is sufficiently
  close to the identity. Then we can take
  $\xi = df(\xi_0)$ where $df$ denotes the differential
  and let $g = f^*\delta$ denote the pullback of the Euclidean
  metric $\delta$ by $f$. Because the Lie derivative is
  invariant under diffeomorphisms, we have   $\xi$ is a Killing field for $g$ since
$\L_\xi g = f^* (L_{\xi_0} \delta) = 0$.
  \item  \textbf{(Circle averaged metric)}: Suppose the orbits of
$\xi$ starting from $D$ are all $2\pi$-periodic.
In this case we say that $\xi$ generates a circle-action.
Defining the circle-averaged metric $g$,
\begin{equation}
 g = \frac{1}{2\pi}\int_{0}^{2\pi} \varphi_s^* \delta \, \rmd s,
 \label{}
\end{equation}
it follows by a simple computation that $\L_\xi g = 0$.  Moreover, when $\xi$ is a Killing field
for Euclidean space, $g = \delta$.  This metric was introduced
by \cite{Burby}. As motivation for the appearance of this particular metric,  consider the MHS in terms of one-forms $\L_B(B^\flat) = d P$ (see \cite{AK99}).  In this representation, it is clear that the metric appears linearly (in the definition of $\flat$).  Therefore, if $B$ and $P$ are invariant under the flow of $\xi$, then one finds $\L_B(B^{\overline{\flat}}) = d P$ where $\overline{\flat}$ denotes lower the index with the circle average metric.  Raising indices with $g$, we find that any such MHS solution on Euclidean space is also a solution of the circle averaged equation (MHS with respect to the metric $g$).

\end{enumerate}

\end{remark}

We conclude with some remarks about achieving exact quasisymmetry.  By construction, the magnetic field $B$ from the previous theorem will
satisfy \eqref{qs2intro} but will only approximately satisfy the property \eqref{qs3intro} of quasisymmetry.  Thus, our fields confine particles to zeroth but not first order in the guiding center approximation \cite{RHB20}. The error from being an exact weak quasisymmetry can be easily quantified; for a vector field $B_g$ of the form \eqref{gBqs},
assuming that $g$ is such that $\L_\xi g = 0$
the condition \eqref{qs3intro} reads
\be\label{qsform}
\xi \cdot\nabla |B_g| = \frac{C^2(\psi)}{2|\xi|_g^4|B_g|} \bigg[ (\L_\xi \delta) (\xi ,\xi)  +2 C^{-1}(\psi)  (\L_\xi \delta) (\xi ,\nabla_g^\perp \psi ) +C^{-2}(\psi) (\L_\xi \delta) (\nabla_g^\perp \psi ,\nabla_g^\perp \psi ) \bigg].
\ee
See Lemma \ref{propBLem}. Since \eqref{qsform} involves the Euclidean deformation tensor alone, it is controlled by the deviation of $\xi$ from being a Euclidean isometry and our solution will have $\xi \cdot\nabla |B_g|$ small. The error from being a strong quasisymmetry is also quantifiably small.

It is worth remarking that there are additional freedoms in our construction that could, in principle, be used to further constrain the constructed solution.  Specifically, in our theorem, we treat $\xi$ as a fixed vector field sufficiently
 close to $\xi_0$ and we made the somewhat arbitrary choice that the
 map $\gamma$ should be volume preserving. The results in \cite{CDG20}
 actually allow one to construct the map $\gamma$ so that $\det\nabla \gamma
 := \rho$ is any given function, sufficiently close to one; in fact
 by iterating that result, one can additionally achieve that  $\det \nabla \gamma
 = X(\phi, \eta, \pa\phi, \pa\eta,\partial \pa_s\phi,\partial \pa_s\eta)$
 for a suitable nonlinearity $X$ sufficiently close to one when $\phi, \eta = 0$.
{Using this freedom, it is possible to show that, under some (possibly restrictive and undesirable) assumptions on the field $\xi$,
the Jacobian $\rho$ can be used to achieve exact quasisymmetry on a slice of the torus (namely on the cross-section $D$). Ensuring this property holds seems of little practical interest for ion confinement in a stellarator, since particles starting on the slice
will immediately leave. In contrast, Garren and Boozer \cite{GB}
and Plunk and Helander \cite{PH}
show that exact quasisymmetry is possible to achieve on one flux surface while
maintaining the MHS force balance, which is of greater relevance to
confinement in a stellarator. It is unknown whether or not
quasisymmetry can be achieved in a volume.
We leave open the question of whether or not, using our approach,
 a carefully designed field $\xi$ (perhaps constructed dynamically along side the solution) can be used to ensure quasisymmetry
on a flux surface or a volume.}

\section{Proof of Theorem \ref{noqsthmvague}}
\label{mainthmpf}

We start by giving an outline of the arguments used to establish
the main theorem. All details can be found in \cite{CDG20}.

Let $D_0, D, \psi_0, \xi, g$ be as in the statement of Theorem
 \ref{noqsthmvague}.
We will start by constructing a solution $\psi$ to the generalized
Grad-Shafranov equation \eqref{gGS} of the form $\psi = \psi_0\circ
\gamma^{-1}$ for a diffeormorphism $\gamma : D_0 \to D$ which is
to be determined. With the toroidal coordinates $(r, \theta, \varphi)$
defined as in \eqref{toroidalcoords}, for functions
$\eta, \phi$ independent of $\varphi$, write
$\nabla \eta = \pa_r \eta e_r + \frac{1}{r} \pa_\theta \eta e_\theta$
and $\nabla^\perp \phi = -\pa_r \phi e_\theta + \frac{1}{r} \pa_\theta
f e_r$. We will look for $\gamma(r, \theta) = (r, \theta)
+\nabla \eta(r, \theta) + \nabla^\perp \phi(r, \theta)$ and the
functions $\eta, \phi$ are the unknowns. For simplicity, using
the assumption that Vol $D$ = Vol $D_0$, we will require that
$\det \nabla \gamma^{-1} = 1$. After a short calculation, this condition
reads
\begin{equation}
 \Delta \eta = \mathcal{N}_{\eta}[\phi,\eta],
 \label{}
\end{equation}
where $\mathcal{N}_{\eta}[\phi, \eta] =
\mathcal{N}_\eta(\pa \phi, \pa \eta, \pa^2\phi,\pa^2 \eta)$ is a quadratic nonlinearity.
We will pose boundary conditions momentarily. We think
of this equation as determining $\eta$ at the linear level from
$\phi$ and it remains to determine $\phi$ in a such a way that
$\psi = \psi_0\circ \gamma^{-1}$ is a solution to
\eqref{gGS}. We now describe how this is done.

The Grad-Shafranov equation \eqref{axigs} is of the form
\begin{equation}
 L_0 \psi_0 = F_0(\psi_0) + G_0(r,\theta, \psi_0), \qquad \text{ in } D_0
 \label{L0model}
\end{equation}
with nonlinearities $F_0 = P'_0,$ $ G_0 = \frac{1}{R^2} C_0C_0'(\psi_0)$ and where the operator $L_0$ is elliptic.
Similarly, we write the generalized Grad-Shafranov \eqref{gGS}
in the form
\begin{equation}
 L \psi = F(\psi) + G(r,\theta, \psi), \qquad \text{ in } D.
 \label{Lmodel}
\end{equation}
At this stage, the function $F$ (which is related to the pressure of the solution in our application) is actually undetermined and will
be chosen momentarily, while $G$ can be chosen to be any function
sufficiently close to $G_0$.

A calculation (see Appendix B of \cite{CDG20})
shows that provided $\det \nabla \gamma^{-1} = 1$, we have
\begin{equation}
 (\nabla \psi)\circ \gamma^{-1}
 = \nabla \psi_0 +
 \nabla \pa_s\phi - \nabla^\perp \pa_s\eta
 + \nabla^\perp\phi\cdot \nabla^2\psi_0
 + \nabla \eta \cdot \nabla^2 \psi_0,
 \label{changeofcoords}
\end{equation}
where we have introduced the notation $\pa_s = \nabla \psi_0\cdot \nabla^\perp$ for the ``streamline derivative''.
 Then $\pa_s$ is tangent to level sets of $\psi_0$.
After a computation, composing both sides
of \eqref{Lmodel} with $\gamma^{-1}$ and using \eqref{changeofcoords},
\eqref{Lmodel} takes the form
\begin{equation}
 \mathscr{L}_{\psi_0} \pa_s\phi = \mathcal{N}_\phi[\phi, \eta] + F(\psi_0) - F_0(\psi_0)
 + G(r, \theta, \psi_0) - G_0(r, \theta, \psi_0),
 \label{Lpsi0model}
\end{equation}
where $\mathcal{L}_{\psi_0}$ is the linearization of
$L_0$ around $\psi_0$,
for a function $\mathcal{N}_\phi[\phi, \eta] = \mathcal{N}_\phi
(\pa \phi, \pa \eta,
\pa\pa_s\phi, \pa \pa_s\eta)$ (whose
explicit form can be found in Appendix
B of \cite{CDG20}), which consists of terms which
are linear in $\eta$ and its derivatives, and either
nonlinear or weakly linear in derivatives of $\phi$,
meaning it involves terms which can bounded by $\epsilon |\pa\phi|$, for example.
This latter point is a consequence of the assumption that
$\xi - \xi_0$ is sufficiently small.
Notice
that at the linear level this is an equation for $\pa_s\phi$ and not $\phi$ itself.
In order for this equation to be solvable for $\phi$ at the linear level
(given appropriate boundary conditions), there
are two requirements. The first is that
$\mathscr{L}_{\psi_0}$ should be invertible. The second is a somewhat
subtle condition which is easiest to understand in the simple model case.
In order to solve the problem
$
 \Delta \pa_\theta u = f,$
in the unit disk, say (with arbitrary boundary conditions), it is clearly
necessary that $\int_{0}^{2\pi} f \, d\theta = 0$. We will now impose
a condition on \eqref{Lpsi0model} which is analogous to this one and which
will determine the function $F$ at the linear level.
Assume that the Dirichlet problem for $\mathscr{L}_{\psi_0}$,
\begin{alignat}{2}
 \mathscr{L}_{\psi_0} u &= f, &&\qquad \text{ in } D_0,\\
 u &= 0, &&\qquad \text{ on } \pa D_0,
 \label{}
\end{alignat}
has a unique solution for $f \in L^2$, say. Writing
$\mathscr{L}^{-1}_{\psi_0} f = u$, if we apply
$\mathscr{L}^{-1}_{\psi_0}$
to both sides of \eqref{Lpsi0model} and integrating with respect to $ds$ over
the streamline $\{\psi_0 = c\}$ (considered as a subset of the two-dimensional
set $D_0$) we find
\begin{equation}
 0 = \oint_{\{\psi_0 = c\}} \mathscr{L}^{-1}_{\psi_0}\big(F(\psi_0) - F_0(\psi_0)\big)
 \, \rmd s
 + \oint_{\{\psi_0 = c\}} \mathscr{L}^{-1}_{\psi_0}\big(G - G_0 + \mathcal{N}\big) \, \rmd s
 \label{}
\end{equation}
This is an equation which must be solved for $F$.
Writing $T(c)q = \oint_{\{\psi_0 = c\}}
\mathscr{L}_{\psi_0}^{-1} q \, \rmd s$,
given $R$ depending only on the streamline
$R = R(c)$, we would like to be able to find
$q = q(c)$ with $T(c)q =R$. This is a complicated problem
which would be hard to address directly,
however in \cite{CDG20}, we show that such $q$ can be found,
assuming that the following hypotheses hold:
\\

\noindent  {\rm Hypothesis 1 ({\rm \textbf{H1}}): }
The operator $\mathscr{L}_{\psi_0}$ is positive definite.\\

 \noindent  {\rm Hypothesis 2 ({\rm \textbf{H2}}): } There exists a constant $C>0$ such that we have
 \be
  \mu(c) = \oint_{\{\psi_0=c\}} \frac{\rmd \ell}{|\nabla \psi_0|}
 \leq C  \qquad \text{for}\ \   c\in {\rm rang} (\psi_0),
 \ee
 where $\ell$ is the arc-length parameter.\\

Notice that the hypothesis ({\rm \textbf{H1}}) in
particular ensures that the operator $\mathscr{L}_{\psi_0}^{-1}$ is well-defined.
This is a condition on
$P_0, C_0$. Hypothesis ({\rm \textbf{H2}}) concerns the travel time $\mu(c)$ for a particle governed by the Hamiltonian system $\dot{x}= \nabla^\perp \psi_0(x)$ and moving along the streamline of $\{\psi_0=c\}$.  It is easy to see that it holds provided $\psi_0$ has at most one critical
point in $D_0 = T_0 \cap \{\varphi = 0\}$ and that it vanishes no faster than
to first order there. We remark that ({\rm \textbf{H2}}) is trivially
satisfied if $|\nabla \psi_0|$ is bounded below
in the domain $D_0$. This could be accomplished if, for example, one worked on a ``hollowed out" toroidal domain.

We now discuss the boundary conditions.
Assume that $D$ is the interior of a Jordan curve $B$,
\begin{equation}
 \pa D = \{p\in \R^2\ | \ \mathfrak{b}(p) = 0\}.
 \label{}
\end{equation}
We also write $\pa D_0 =\{p \in \R^2 \ |\  \mathfrak{b}_0(r,\theta) = 0\}$
where $\mathfrak{b}_0$ is chosen with $|\nabla \mathfrak{b}_0| = 1, \nabla
\psi_0\cdot \nabla \mathfrak{b}_0 > 0$.
We write $\gamma - {\rm id} = \nabla \eta + \nabla^\perp \phi = \alpha e_x + \beta e_y$ where
$(x,y)$ are rectangular coordinates.
Using that $\mathfrak{B}_0|_{\pa D_0} = 0$, the requirement that $\gamma:\pa D_0 \to\pa D$
can be written as
\begin{align}
 0 = \mathfrak{b}\circ \gamma|_{\partial D_0}  &= \mathfrak{b}_0\circ \gamma|_{\partial D_0} +
  ( \delta \mathfrak{b})\circ \gamma|_{\partial D_0} \\
 &=
\alpha \pa_x \mathfrak{b}_0|_{\partial D_0} + \beta \pa_y \mathfrak{b}_0|_{\partial D_0}
 + \mathfrak{b}_1(\alpha, \beta )|_{\partial D_0}
 \label{bc}
\end{align}
where $\delta \mathfrak{b} = \mathfrak{b} - \mathfrak{b}_0$, and
where the remainder $\mathfrak{b}_1$ is
\begin{equation}
 \mathfrak{b}_1(\alpha, \beta, x,y)
 =  \mathfrak{b}_0\circ \gamma - \mathfrak{b}_0   -\alpha \pa_1
 \mathfrak{b}_0 -\beta \pa_2 \mathfrak{b}_0 + ( \delta \mathfrak{b})\circ \gamma.
 \label{B1def}
\end{equation}
 Returning to $\phi, \eta$, we have
\begin{equation}
\alpha \pa_1 \mathfrak{b}_0 + \beta \pa_2 \mathfrak{b}_0
 =  \nabla \phi\cdot \nabla^\perp \mathfrak{b}_0 +  \nabla \eta \cdot\nabla \mathfrak{b}_0
 \label{alphaphi}
\end{equation}
By the choice of $\mathfrak{b}_0$, we have $\nabla \eta \cdot \nabla
\mathfrak{b}_0 =\partial_n \eta$ where $n$ is the outward-facing normal
to $\pa D_0$. Additionally using that $\psi_0$ is constant on the boundary we have
$\nabla^\perp \mathfrak{b}_0 = \frac{\nabla^\perp \psi_0}{|\nabla\psi_0|}$, and
using \eqref{alphaphi} and these observations, the formula
\eqref{bc} becomes
\begin{equation}
 \frac{1}{|\nabla \psi_0|} \pa_s\phi +\partial_n \eta  = -\mathfrak{b}_1(\phi, \eta),
 \qquad \text{ on } \pa D_0.
 \label{}
\end{equation}

This is one boundary condition for the two functions $\phi, \eta$.
Again we need to ensure that this equation is compatible with the
requirement $\oint_{\{\psi_0 = \psi_0|_{\pa D_0}\}} \pa_s\phi \, \rmd s = 0$.
We therefore take $\partial_n\eta$ constant on the boundary
and impose the following nonlinear boundary conditions.
\begin{alignat}{2}
\partial_{{n}}  \eta &= -\frac{ \oint_{\partial D_0}
   \mathfrak{b}_1(\phi, \eta)\, {\rmd\ell}}{{\rm length}(\partial D_0)}
 &&\quad \text{ on } \pa D_0,
 \label{etabc}\\
 \pa_s \phi &= |\nabla \psi_0| \left(-\mathfrak{b}_1(\phi, \eta) +\frac{ \oint_{\partial D_0}
   \mathfrak{b}_1(\phi, \eta)\, {\rmd\ell}}{{\rm length}(\partial D_0)} \right)
 &&\quad \text{ on } \pa D_0.
 \label{phibc}
\end{alignat}

We now summarize the result of the above calculation.
The function $\psi = \psi_0\circ \gamma^{-1}$
is a solution of the equation
\eqref{Lmodel} in $D$ with constant boundary value
provided the diffeomorphism $\gamma$ is of the
form $\gamma = {\rm id} + \nabla \eta +
\nabla^\perp \phi$ and the functions $\eta, \phi
:D_0 \to \R$
satisfy the elliptic equations
\begin{alignat}{2}
 \Delta \eta &= \mathcal{N}_\eta[\phi, \eta]
 &&\qquad \text{ in } D_0, \label{etaeqn}\\
 \mathscr{L}_{\psi_0} \pa_s\phi&=
 \mathcal{N}_\phi[\phi, \eta]+ F(\psi_0) - F_0(\psi_0)
 + G(\psi_0, r, \theta) - G_0(\psi_0, r, \theta),&&\qquad \text{ in } D_0,
 \label{phieqn}
\end{alignat}
where $F$ is determined by solving
\begin{equation}
  \oint_{\{\psi_0 = c\}}
  \mathscr{L}_{\psi_0}^{-1} F \, \rmd s
  =
 \oint_{\{\psi_0 = c\}} \mathscr{L}_{\psi_0}^{-1}
 (G_0 - G- \mathcal{N}_\phi + F_0) \, \rmd s,
 \label{Feqn}
\end{equation}
and where $\eta, \phi$ satisfy the boundary conditions
\eqref{etabc}-\eqref{phibc}.

This nonlinear system can be solved by the following iteration scheme.
Given $\eta^{N-1}, \phi^{N-1}$, define $F^N = F^N(c)$ by solving
\begin{equation}
 \oint_{\{\psi_0 = c\}} \mathscr{L}_{\psi_0}^{-1} F^N \, \rmd s
 = \oint_{\{\psi_0 = c\}} \mathscr{L}_{\psi_0}^{-1}
 (G_0 - G- \mathcal{N}_\phi[\phi^{N-1}, \eta^{N-1}] + F_0) \, \rmd s.
 \label{}
\end{equation}
Then solve for $\eta^N, \Phi^N$ satisfying
\begin{alignat}{2}
 \Delta \eta^{N} &= \mathcal{N}_\eta[\phi^{N-1}, \eta^{N-1}]
&&\qquad \text{ in } D_0,
  \label{etaeqnN}\\
 \mathscr{L}_{\psi_0} \Phi^{N} &= \mathcal{N}_\phi[\phi^{N-1},
 \eta^{N-1}]
 + F^N - F_0 + G - G_0
 &&\qquad \text{ in } D_0,
\end{alignat}
with boundary conditions
\begin{alignat}{2}
\partial_n \eta^N &= \int_{D_0} \mathcal{N}_{\eta}[\phi^{N-1}, \eta^{N-1}] \rmd x,
 \label{etaNbc}\\
 \Phi^N &= |\nabla \psi_0| \left(-\mathfrak{b}_1(\phi^{N-1}, \eta^{N-1}) +\frac{ \oint_{\partial D_0}
   \mathfrak{b}_1(\phi^{N-1}, \eta^{N-1})\, {\rmd\ell}}{{\rm length}(\partial D_0)} \right)
 &&\quad \text{ on } \pa D_0.
 \label{}
\end{alignat}

The boundary condition for $\eta^N$ has been chosen so that the Neumann
problem \eqref{etaeqnN}-\eqref{etaNbc} is solvable. Once $\Phi^N$ has been
found, as a consequence of the choice of $F^N$ it can be shown that
$\oint_{\{\psi_0 = c\}} \Phi^N \, \rmd s = 0$ for all $c$ and so $\Phi^N = \pa_s\phi^N$
for a function $\phi^N$ which is determined up to a constant, which can
be fixed throughout the iteration by requiring that $\int_{D_0} \phi^N = 0$.
In \cite{CDG20} we prove that this iteration converges in a suitable topology.
We remark that the boundary condition \eqref{etaNbc} is not the same as
the boundary condition in \eqref{etabc} but as a consequence of Vol $D_0 = $
Vol $D$, they agree after taking $N\to \infty$.

\begin{proof}[Proof of Theorems \ref{noqsthmGS}-\ref{noqsthmvague}]

We first reduce the problem to solving a certain elliptic problem
on the domain $D$. Given a (local) coordinate system
$(x_1, x_2)$ defined on a neighborhood of $D$, we can extend it to a (local) coordinate system
on a neighborhood of the torus $T$ by pulling back along the flow of $\xi$.
Explicitly, given $p \in T$, there is a unique $p_0
\in D$ and a unique smallest $x_3 > 0$ so that $\Phi_{x_3}(p_0) = p$
where $\Phi_s(p_0)$ denotes the time-$s$ flow of $\xi$ starting from
a point $p_0 \in D$, because the integral curves of $\xi$ are
closed. Then the map
$\Psi: T \to D \times \R$ defined by $\Psi(p) = (p_0, x_3)$ is a
(local)
diffeomorphism onto its image. In these coordinates,
$\xi \cdot \nabla f = \frac{\pa}{\pa x^3} f$ for any function $f$.
We now express the given metric $g$ in this coordinates,
$g = \sum_{i,j = 1}^3 g_{ij}(x_1,x_2,x_3) dx^idx^j$, where
$g_{ij} = g(\pa_{x^i}, \pa_{x^i})$. By the definition of the
Lie derivative of the metric we have
\begin{equation}
 (\L_\xi g)_{ij}
 = (\xi\cdot \nabla) g_{ij}
 - g([\xi, \pa_{x^i}], \pa_{x^j})
 - g([\xi, \pa_{x^j}], \pa_{x^i})
 = \frac{\pa}{\pa x^3} g_{ij} = 0,
 \label{}
\end{equation}
since by assumption $\L_\xi g = 0$ and since $[\xi, \pa_{x^\ell}] =
[\pa_{x^3}, \pa_{x^\ell}]= 0$ by
construction.
In this coordinate system, the Grad-Shafranov equation
is the following three-dimensional elliptic equation
\begin{alignat}{2}
 L \psi := \sum_{i,j = 1}^3
 a_{\xi,g}^{ij} \pa_{x^i}\pa_{x^j}\psi
 + \sum_{i = 1}^3 b_{\xi,g}^i \pa_{x^i} \psi
 + G_{\xi,g}(x_1, x_2, x_3, C, \psi)+ \frac{1}{\sqrt{|g|}}P'(\psi)
 &=0,
 &&\quad \text{ in } T,
 \label{3dgs}
\end{alignat}
for coefficients $a_{\xi,g}^{ij}, b_{\xi,g}^j$ and a function $G_{\xi,g}$, depending
on $x_1, x_2, x_3$ which are all computed explicitly in Appendix
\ref{explicitGS}.
The crucial point is that all of these quantities are independent of
$x_3$, because they involve algebraic functions of components of the metric.

We can therefore look for a two-dimensional solution,
$\overline{\psi} = \overline{\psi}(x_1, x_2)$, of the equation
\begin{equation}
 \sum_{i,j = 1}^2
 a_{\xi,g}^{ij}(x_1, x_2) \pa_{x^i} \pa_{x^j} \overline{\psi}
 + \sum_{i = 1}^2 b_{\xi, g}^i(x_1, x_2) \pa_{x^j} \overline{\psi}
  +G_{\xi,g}(x_1, x_2, C, \overline{\psi}) + \frac{1}{\sqrt{|g|}}P'(\overline{\psi}) = 0,
 \label{2dgspbm}
\end{equation}
in $D$
with $\overline{\psi}$ constant on $\pa D$.
Given such $\overline{\psi}$, we can recover $\psi$ satisfying \eqref{3dgs}
by setting $\psi(x_1, x_2, x_3) = \overline{\psi}(x_1, x_2)$, i.e. by extending
$\overline{\psi}$ to be constant along integral curves of $\xi$.
Since the
integral curves of $\xi$ are closed it follows that $\psi$ is as smooth as
$\overline{\psi}$, and by construction we have $\L_\xi \psi = 0$. Also since $\xi$ is tangent to $\pa T$
it follows that the resulting $\psi$ is constant there.

Supposing that we have a solution $\overline{\psi}$ as above,
by Lemma \ref{propBlem1},
defining $B$ as in \eqref{gBqs}
provides a magnetic field satisfying $\div B = 0$ and which satisfies
the MHS equations with respect to $g$, $\curl_g B \times_g B_g = \nabla_g P$
exactly, by
Lemma \ref{curlBBlem}. As a consequence,
$B$ satisfies the usual MHS equations with forcing
\begin{equation}
 f = (\curl_g - \curl) B \times B + \curl_g B(\times_g - \times) B
 + (\nabla_g - \nabla)P
 \label{}
\end{equation}
From the formulae for $\curl_g, \times_g$
in Appendix \ref{notation}, it is clear this
satisfies a bound of the form \eqref{mhdwforcebd},
completing the proof of Theorem \ref{noqsthmvague}.

We have therefore reduced the problem to solving the generalized
Grad-Shafranov equation for a function $\overline{\psi}: D \to \R$.
In what follows we will abuse notation and just write $\psi = \overline{\psi}$.
Using e.g. variational methods (Proposition
11.4 of  \cite{T10}), in principle one can find a weak
 solution to this equation in $H^1_0$.  Unfortunately, these solutions need not be smooth and, more importantly, the structure of the level sets cannot be specified. In particular, the flux function may possess ``magnetic islands".  We
provide here am explicit construction of classical solutions which allows for control of flux surfaces, based on the approach
of \cite{CDG20}. As explained above, the method there is to deform
a solution $\psi_0$ to the axisymmetric Grad-Shafranov equations
into a solution to \eqref{2dgspbm} and this has the benefit of ensuring
that the level sets of the resulting $\psi$ are tori, as well as providing
a simple algorithm to compute the solution.

Let $\psi_0:D_0 \to \R$ be a solution
to the axisymmetric Grad-Shafranov equation
\eqref{axigs} with $\psi_0|_{\pa D_0}$ constant,
satisfying the mild hypotheses (\textbf{H1}) and (\textbf{H2})
(see Appendix \ref{examplesec} for an example of such a flux function).
We begin by writing the axisymmetric Grad-Shafranov equation on the
 unit disk $D_0$ in the same coordinate system $(x_1, x_2)$
  as above. Letting $h_{ij}$ denote the components of the Euclidean
  metric restricted to $D_0$ in this coordinate system, on $D_0$ we have
 \begin{equation}
 L_0\psi_0
 := \sum_{i,j = 1}^2 \tilde{a}^{ij}_{\xi_0,\delta}(x_1, x_2) \pa_{x^i} \pa_{x^j}\psi_0
 + \sum_{i = 1}^2 \tilde{b}^i_{\xi_0,\delta}(x_1, x_2) \pa_{x^i} \psi_0 +
 \tilde{G}_{\xi_0,\delta}(x_1, x_2, C_0, \psi_0) + \frac{1}{\sqrt{|h|}} P_0'(\psi_0)
 =0,
 \label{}
\end{equation}
where $|h|$ denotes the determinant of the matrix $h_{ij}$,
$\xi_0 = R e_\Phi$ is the generator of rotations in the
$Z = 0$ plane, and $\tilde{a}^{ij}_{\xi_0,\delta}, \tilde{b}^i_{\xi_0,\delta},
\tilde{G}_{\xi_0,\delta}$ can be computed
explicitly by changing variables in
\eqref{axigs},
\begin{align}
    \tilde{a}^{ij}_{\xi_0,\delta} &= \frac{\sqrt{|h|}}{|\xi_0|^2}
    h^{ij},\quad
    \tilde{b}^i_{\xi_0,\delta} =
     \sum_{j =1,2} \frac{1}{\sqrt{|h|}} \pa_{x^i} \bigg( \frac{\sqrt{|h|}}{|\xi_0|^2}
     h^{ij}\bigg),\quad
     \tilde{G}_{\xi_0,\delta} &=\frac{C(\psi)}{|\xi_0|^2}\left(\frac{C'(\psi)}{ \sqrt{|h|}}
    -
    \xi_0\cdot \curl \xi_0\right),
    \label{}
   \end{align}
In order to appeal to the results of \cite{CDG20} we need that
the coefficients of $L$ are close to those of $L_0$, that $G_{\xi_0,\delta}$ is close to $G_{\xi, g}$ and
that the domain $D$ is close to $D_0$. For simplicity,
we take the function $C$ in \eqref{2dgspbm} to just be $C_0$ though
this is not essential.
From the
 formulas in Appendix \ref{explicitGS} we have
\begin{equation}
  \sum_{i,j =1}^2\|a_{\xi,g}^{ij} - \tilde{a}_{\xi_0,\delta}^{ij}\|_{C^{k,\alpha}}
  +\sum_{i = 1}^2\|b_{\xi,g}^i - \tilde{b}_{\xi_0,\delta}^i\|_{C^{k,\alpha}}
  + \|G_{\xi,g} - \tilde{G}_{\xi_0,\delta}\|_{C^{k,\alpha}}
  \leq c\|g - \delta\|_{C^{k+1,\alpha}}
  +c\|\xi - \xi_0\|_{C^{k+1,\alpha}}
 \label{controlcoeffs}
\end{equation}
where $c$ is a constant depending on $k, \alpha,
\sum_{i,j = 1}^3\|g\|_{C^{k+2,\alpha}}$,
 and $\|C_0\|_{C^{k+3,\alpha}}$. Here, and in what follows,
we are writing $C^{k+2,\alpha} = C^{k+2,\alpha}(U)$ where
$U$ is a domain containing both $D$ and $D_0$.
By Theorem 3.1 from \cite{CDG20}, there is $\epsilon > 0$
depending on $k, \alpha, \psi_0, D_0$ so that if the
following holds,
\begin{equation}
  \sum_{i,j =1}^2\|a_{\xi,g}^{ij} - \tilde{a}_{\xi_0,\delta}^{ij}\|_{C^{k,\alpha}}
  +\sum_{i = 1}^2\|b_{\xi,g}^i - \tilde{b}_{\xi_0,\delta}^i\|_{C^{k,\alpha}}
  + \|G_{\xi,g} - \tilde{G}_{\xi_0,\delta}\|_{C^{k,\alpha}}
  + \|\mathfrak{b} - \mathfrak{b_0}\|_{C^{k+2,\alpha}}
  \leq \epsilon,
 \label{needforflex}
\end{equation}
and the hypotheses ({\rm \textbf{H1}}) and ({\rm \textbf{H2}})
hold, there is a function
$\psi \in C^{k,\alpha}$ of the form $\psi = \psi_0\circ \gamma^{-1}$ where
 $\gamma: D_0 \to D$ is a diffeomorphism and $\psi$ satisfies the
 generalized Grad-Shafranov equation \eqref{gGS} for some pressure profile
 $P$ which is close to $P_0$.
 We now take $\|g - \delta\|_{C^{k+1,\alpha}},\|\xi - \xi_0\|_{C^{k+2,\alpha}}$ and
 $\|\mathfrak{b} - \mathfrak{b}_0\|_{C^{k+2,\alpha}}$
 small enough that \eqref{needforflex} holds and let $\psi$ be the flux
 function guaranteed by Theorem 3.1 from \cite{CDG20}. This completes the proof of Theorem \ref{noqsthmGS}.
\end{proof}

\appendix

\section{Flux function satisfying our hypotheses}
\label{examplesec}

 The purpose of this section is to give a simple example of
 a flux function satisfying the hypotheses ({\rm \textbf{H1}})-({\rm \textbf{H2}}).
 We will work in toroidal coordinates $(r, \theta, \varphi)$ defined by
 \begin{equation}
  R = R_0 + r\cos \theta, \qquad
  Z = r\sin \theta, \qquad \Phi = \varphi,
  \label{toroidalcoords}
 \end{equation}
 where $(R, Z, \Phi)$ are the usual cylindrical coordinates on $\R^3$. In these
 coordinates, \eqref{axigs} becomes
 \begin{equation}
   \pa_r^2 \psi_0 + \frac{1}{r} \pa_r\psi_0 + \frac{1}{r^2}
  \pa_\theta^2 \psi_0 - \frac{1}{R} \bigg(\cos \theta \pa_r \psi_0
  - \frac{\sin \theta}{r} \pa_\theta \psi_0\bigg)
  + R^2p_0'(\psi_0) + C_0C_0'(\psi_0) = 0.
  \label{toroidalgs}
 \end{equation}
 The flux function we exhibit is not an exact solution of \eqref{toroidalgs} but satisfies
 it when the aspect ratio of the torus is taken to infinity.  Using Theorem 3.1 from \cite{CDG20}, one can show that there exist solutions on the true axisymmetric torus with large aspect ratio nearby this example.  Although they do not have a simple analytical form, they will continue to satisfy ({\rm \textbf{H1}})-({\rm \textbf{H2}}) as these are open conditions.

We consider the torus where $r$  ranges in $ [0, r_0]$ with $0< r_0<R_0$
for $R_0 > 1$ and solve the equation \eqref{toroidalgs} with the choices
\be
{\psi}_0(r)= \bar{\psi}\Big(1-(r/r_0)^2\Big), \qquad C_0(\psi) = \bar{c}\sqrt{\bar{\psi}- \psi + \epsilon },  \qquad P_0(\psi)=\bar{p} (r_0 R_0)^{-2} \psi,
\ee
for $\epsilon \ll 1$ (this is to regularize the square-root) and for some constants $\bar{\psi}, \bar{c}$ and $\bar{p}$.   The functions $C_0$ and $P_0$ are both infinitely differentiable functions of $\psi$. Note that the pressure vanishes at the outer boundary where $\psi_0$ is zero, and so this boundary may be interpreted as vacuum.
For special choices of constants, ${\psi}_0$ solves the ``infinite aspect ratio'' Grad-Shafranov equation (\eqref{toroidalgs} as $R_0\gg 1$)
\be
  \pa_r^2\psi_0 + \frac{1}{r^2} \pa_\theta^2\psi_0 = -R_0^2 P_0'(\psi_0) - C_0C_0'(\psi_0),
\ee
since $  \pa_r^2\psi_0 = -2\bar{\psi}/r_0^2$, $R_0^2 P_0'(\psi_0)=  \bar{p}/r_0^2$ and  $C_0C_0'(\psi_0)=  \bar{c}^2$. Thus $\psi_0$ is a solution if
$
  \bar{p} = 2\bar{\psi}- (\bar{c}r_0)^2.
$

\section{Geometric identities}
\label{notation}

In this section we recall some basic definitions and facts from Riemannian
geometry which will be used in the upcoming sections.
These are standard and we include the details for the convenience
of the reader.
Throughout we fix a Riemannian metric $g$. In our applications, we will take
either $g = \delta$, the Euclidean metric, or $g$ will be a metric
with $\L_\xi g = 0$ for a given vector field $\xi$.
We let $\flat, \sharp$ denote the usual operations of lowering and raising
indices with respect to $g$. If $X = X^i \pa_{x^i}$ is a vector field and
$\beta = \beta_i dx^i$ is a one-form, where $\{x^i\}_{i = 1}^3$ are arbitrary
local coordinates, then
\begin{equation}
 X^\flat = g_{ij}X^j dx^i, \qquad
 \beta^\sharp = g^{ij}\beta_j \pa_i.
 \label{flat}
\end{equation}

We write $\nabla_g f$ for the gradient of $f$ with respect to the metric
$g$,
\begin{equation}
  \nabla_g f = (df)^\sharp, \qquad
  (\nabla_g f)^i = g^{ij} \pa_j f.
 \label{nablagdef}
\end{equation}

In an arbitrary coordinate system $\{x^i\}_{i = 1}^3$,
if $X = X^i \pa_i$ is a vector field and $\beta = \beta_i dx^i$
is a one-form then $\nabla X, \nabla \beta$ have components
\begin{equation}
 \nabla_i X^j = \frac{\pa}{\pa x^i} X^j + \Gamma_{ik}^j X^k,
 \qquad
 \nabla_i \beta_j = \frac{\pa}{\pa x^i} \beta_j - \Gamma_{ij}^k \beta_k,
 \label{covariant}
\end{equation}
where $\Gamma^i_{jk}$ are the Christoffel symbols in this
coordinate system, defined by
\begin{equation}
 \Gamma_{ij}^k = \frac{1}{2} g^{k\ell} \big( \pa_i g_{j\ell} + \pa_j g_{i\ell}
 - \pa_\ell g_{ij}\big).
 \label{christ}
\end{equation}
Here we are writing $g_{ij}$ for the components of the metric in this
corodinate system and $g^{ij}$ for the components of the inverse metric.
The $\Gamma$ are symmetric in the lower
indices,
\begin{equation}
 \Gamma_{ij}^k = \Gamma_{ji}^k.
 \label{symmetry}
\end{equation}
We also note that covariant differentiation commutes with lowering and raising indices since
\begin{equation}
 \nabla_i g_{jk} = \nabla_i g^{jk} = 0.
 \label{metric}
\end{equation}
Let us also recall that the divergence of a vector field can be written as
\begin{equation}
 \div_g X = \nabla_i X^i = \frac{1}{\sqrt{|g|}}\pa_i (\sqrt{|g|} X^i),
 \label{}
\end{equation}
where $|g|  =\det g$ denotes the determinant of the matrix
with components $g_{ij}$.

We let $\L_X$ denote the Lie derivative
in the direction $X$. If $f$ is a function then $\L_X f$ is defined by
\begin{equation}
 \L_X f = X^i\pa_i f = X f.
 \label{}
\end{equation}
For a vector field $Y$, $\L_X Y$ is the commutator $\L_XY = [X, Y]$.
In an arbitrary coordinate system, $\L_X Y = (\L_X Y)^i\pa_i$ with
\begin{equation}
(\L_X Y)^i = X^j \pa_j Y^i - Y^j\pa_j X^i.
 \label{}
\end{equation}
Many of our results will be stated in terms of the deformation tensor
of $X$, denoted $\L_X g$, which is the $(0,2)$ tensor defined by the formula
\begin{equation}
  X \big( g(Y, Z) \big) =
 (\L_X g)(Y, Z) + g(\L_X Y, Z) + g(Y, \L_X Z)
 \label{product1}
\end{equation}
In an arbitrary coordinate system, $\L_X g = \L_X g_{ij} dx^i dx^j$ and
a standard calculation shows that
\begin{equation}
 \L_X g_{ij} = \nabla_i X_j + \nabla_j X_i, \qquad X_k = g_{k\ell} X^\ell,
 \label{deformation}
\end{equation}
where $\nabla$ denotes covariant differentiation \eqref{covariant}.
We will often abuse notation and write $\L_X g(Y,\cdot)$ for the vector
field with components
\begin{equation}
 (\L_X g(Y, \cdot))^i = g^{ij} (\nabla_j X_k + \nabla_k X_j) Y^k.
 \label{abuse}
\end{equation}

Let $*_g$ denote the Hodge star with respect to the Riemannian volume
form $d\mu = \sqrt{|g|} dx^1 \wedge dx^2\wedge dx^3$. For the general definition
 see \cite{L13}. For our purposes we will only need to compute
 $*_g \omega$ when $\omega$ is a two-form. With $\epsilon_{ijk}$ denoting the Levi-Civita
symbol, so that $\epsilon_{ijk}$ denotes the sign of the permutation
taking $(1,2,3)$ to $(i,j,k)$, we have
\begin{equation}
 *_g (dx^i \wedge dx^j)
 = \sqrt{|g|} g^{ik}g^{j\ell} \epsilon_{k\ell m} dx^m.
 \label{}
\end{equation}
If $\beta = \beta_{ij} dx^i\wedge dx^j$ is a two-form then from the above
formula,
\begin{equation}
 *_g \beta = \sqrt{|g|} \beta^{k\ell} \epsilon_{k\ell m}\, dx^m,
 \qquad \beta^{k\ell} = g^{ik}g^{j\ell} \beta_{ij}.
 \label{}
\end{equation}
Let $d$ denote exterior differentiation. If $\beta$ is a one-form then
$d \beta$ is defined by
\begin{equation}
 d \beta = \pa_i \beta_j dx^i\wedge dx^j.
 \label{}
\end{equation}
We will use the following identity relating $*_g, d$ and covariant differentiation
$\nabla$. If $\omega =\omega_{ij} dx^i dx^j$ is a (0,2)-tensor then
\begin{equation}
  *_g d *_g \omega = \delta_g \omega,
\qquad  (\delta_g \omega)_i:=  g^{kj}\nabla_j \omega_{ik}. \label{deltag2form}
\end{equation}

  Given vector fields $X, Y$, let $X \times_g Y$ be the vector field
\begin{equation}
 X \times_g Y = (*_g X^\flat \wedge Y^\flat)^\sharp.
 \label{timesgdef}
\end{equation}
Explicitly, $X \times_g Y = (X \times_g Y)^\ell \pa_\ell$ with
$
 (X \times_g Y)^k = \sqrt{|g|} g^{k\ell} \epsilon_{ij\ell} X^i Y^j.
 $
The curl of a vector field, $\curl_g X$, is then defined by
\begin{equation}
 \curl_g X = ( *_g d X^\flat)^\sharp,
 \label{curlgdef}
\end{equation}
or, in components,
\begin{equation}
 (\curl_g X)^m
 = \sqrt{|g|} g^{mn} g^{ik} g^{j\ell}\epsilon_{k\ell n} \pa_i X_j
 = \sqrt{|g|} g^{mn} g^{ik} \epsilon_{k\ell n} \nabla_i X^\ell,
 \label{explicitcurl}
\end{equation}
where the second equality follows from a direct calculation involving
the formula for the Christoffel symbols \eqref{christ}.

We now collect some basic vector calculus identities.
\begin{lemma}\label{lemgid}
   Define $\times_g$ by \eqref{timesgdef}, $\curl_g$ by \eqref{curlgdef},
   and $\nabla_g$ by \eqref{nablagdef}. Suppose that $M$ is a subset of
   $R^3$.
 Let $\times, \cdot$ denote the usual cross and dot products in Euclidean space.
 Then we have
 \begin{align}
  (X \times_g Y)\cdot_g Z &= \sqrt{|g|} X\times Y \cdot Z,\label{determinant}\\
  \curl_g (f X) &= \nabla_g f \times_g X + f \curl_g X, \label{product}\\
  \curl_g \nabla_g f &= 0,
  \label{d2iszero}\\
  (X\times_g Y)\times_g Z &=
   (X\cdot_g Y) Z - (X\cdot_g Z )Y.
  \label{tripleproduct}
 \end{align}
\end{lemma}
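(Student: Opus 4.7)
The plan is to prove the four identities using whichever of two methods is most efficient: exterior calculus ($d^2=0$ and the graded Leibniz rule) for the differential identities \eqref{product} and \eqref{d2iszero}, and reduction to the Euclidean case at a point for the algebraic identities \eqref{determinant} and \eqref{tripleproduct}. This avoids working through Christoffel symbols and also sidesteps long Levi-Civita index contractions.

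For \eqref{d2iszero}, I would unwind the definitions: $(\nabla_g f)^\flat = ((df)^\sharp)^\flat = df$, so $\curl_g \nabla_g f = (*_g\, d((\nabla_g f)^\flat))^\sharp = (*_g\, d(df))^\sharp$, which vanishes because $d^2 = 0$ on any differential form. For \eqref{product}, I would start from $(fX)^\flat = f X^\flat$ and apply the graded Leibniz rule for $d$ to get $d((fX)^\flat) = df \wedge X^\flat + f\, dX^\flat$. Applying $*_g$ (which is linear) and then $\sharp$ and comparing with the definitions \eqref{timesgdef} and \eqref{curlgdef} yields $\curl_g(fX) = \nabla_g f \times_g X + f\,\curl_g X$, since $\nabla_g f = (df)^\sharp$.

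For \eqref{determinant} and \eqref{tripleproduct}, observe that both sides of each identity are tensorial in $X, Y, Z$, so it suffices to verify them at an arbitrary point $p$. Fix $p$ and choose linear coordinates $\{x^i\}$ in which $g_{ij}(p) = \delta_{ij}$; this is possible by Gram--Schmidt. At $p$ we then have $\sqrt{|g|}(p) = 1$, $g^{ij}(p) = \delta^{ij}$, $\cdot_g = \cdot$, and the component formula following \eqref{timesgdef} reduces to $(X\times_g Y)^k|_p = \epsilon_{ijk} X^i Y^j = (X\times Y)^k|_p$. Identity \eqref{determinant} then collapses at $p$ to the classical fact $(X\times Y)\cdot Z = \det[X, Y, Z]$, while \eqref{tripleproduct} reduces to the standard Euclidean triple-product expansion. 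Since $p$ was arbitrary and each identity is an equality of tensors, both hold globally.

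The main (minor) obstacle is confirming that the sign convention implicit in \eqref{timesgdef} is exactly the one that makes the pointwise reduction to the Euclidean $\times$ work out with the correct orientation; once this is checked the argument in each case is essentially a one-line reduction. A purely in-coordinates alternative for \eqref{tripleproduct} would require the Levi-Civita contraction identity $|g|\, g^{ab}\epsilon_{acd}\epsilon_{bef}$, which, using that $|g| g^{ab}$ is the $(a,b)$-cofactor of $g_{ab}$, collapses to $g_{ce}g_{df} - g_{cf}g_{de}$; this reproduces the same answer but with noticeably more bookkeeping, so I would only fall back on it if the formulation in the paper turned out to require a specific sign convention that the coordinate-free reduction did not make transparent.
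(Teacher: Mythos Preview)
Your proposal is correct and matches the paper's approach: the paper simply declares the first three identities ``immediate'' (which your exterior-calculus arguments make precise) and proves \eqref{tripleproduct} by the same reduction to orthonormal coordinates at a point that you describe, referring forward to the proof of Lemma~\ref{timestimesglem}. The only minor difference is that \eqref{determinant} follows directly from the component formula $(X\times_g Y)^k = \sqrt{|g|}\, g^{k\ell}\epsilon_{ij\ell} X^i Y^j$ by contracting with $g_{km}Z^m$, so the coordinate-reduction step is not needed there.
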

\begin{proof}
The first three identities are immediate.
The last identity is proven by changing coordinates as in the proof of the upcoming
identity \eqref{timestimesglem} and we omit the proof.
\end{proof}

We will also need the following slightly more complicated identities
in the next section.
\begin{lemma}\label{lemmag} Let  $\nabla_g$ denote covariant differentiation.
For any vector fields $X, Y$,
 \begin{align}
  \curl_g (X\times_g Y) &= X \div_g Y - Y \div_g X + \L_Y X,
  \label{magic1}\\
   \nabla_g |X|_g^2 &= 2\nabla_X X - 2X \times_g \curl_g X,
  \label{magic2}\\
  \L_X (X^\flat) &= (\nabla_X X)^\flat + \frac{1}{2} \nabla_g |X|_g^2,
  \label{magic2point5}\\
  \div_g (X \times_g Y) &= Y\cdot_g \curl_g X - X \cdot_g \curl_g Y,
  \label{magic3}\\
  X \times_g \curl_g X &= \nabla_g |X|^2 + (\L_\xi g(X,\cdot))^\sharp,
  \label{lastid}
 \end{align}
  where $ \nabla_X := X^i \nabla_i$,  and where $\L_Xg$, defined in \eqref{deformation}
 denotes the deformation tensor of $X$, and we are using the notation \eqref{abuse}.
\end{lemma}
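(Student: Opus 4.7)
The plan is to translate each identity into the language of differential forms using the correspondences $\curl_g X = (*_g d X^\flat)^\sharp$, $(X\times_g Y)^\flat = *_g(X^\flat \wedge Y^\flat)$, and $\div_g X = *_g d *_g X^\flat$ recorded in the preceding pages, and then to apply Cartan's magic formula $\L_Z = d\iota_Z + \iota_Z d$ together with the standard exterior algebra in dimension three. An equivalent route, which is often the fastest way to check any single step, is to pass to geodesic normal coordinates at an arbitrary point $p$, where $\Gamma^k_{ij}(p)=0$ so each identity at $p$ collapses to the corresponding classical Euclidean vector calculus identity; since each side of the identities in question is a tensor, pointwise agreement suffices.

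Identity \eqref{magic3} is immediate from the form picture: since $*_g *_g = \mathrm{id}$ on two-forms in three dimensions,
\[
 \div_g(X\times_g Y) = *_g d\,*_g *_g (X^\flat \wedge Y^\flat) = *_g d(X^\flat \wedge Y^\flat) = *_g(dX^\flat \wedge Y^\flat) - *_g(X^\flat \wedge dY^\flat),
\]
and the pairing identity $*_g(\omega \wedge Z^\flat) = Z\cdot_g (*_g \omega)^\sharp$ for a two-form $\omega$ identifies the two summands as $Y\cdot_g \curl_g X$ and $X\cdot_g \curl_g Y$ respectively. For identity \eqref{magic1} I would apply Cartan's magic formula to $X^\flat$ along $Y$ and compare the result with $d\,*_g(X^\flat \wedge Y^\flat)$, or, more quickly, verify it pointwise in normal coordinates where it reduces to the classical $\nabla\times(X\times Y) = X\,\div Y - Y\,\div X + [Y,X]$ together with the identification $\L_Y X = [Y,X]$.

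For identities \eqref{magic2point5} and \eqref{magic2}, Cartan's formula applied to $X^\flat$ along $X$ yields
\[
 \L_X X^\flat = d \iota_X X^\flat + \iota_X dX^\flat = d|X|_g^2 + \iota_X dX^\flat,
\]
and a direct coordinate computation using metric compatibility $\nabla g = 0$ gives $(\L_X X^\flat)_i = (\nabla_X X)_i + \tfrac12 \nabla_i |X|_g^2$, which is precisely \eqref{magic2point5}. Identifying $\iota_X dX^\flat$ with (a sign of) $(X\times_g \curl_g X)^\flat$ via the $\epsilon$-contraction identity $\epsilon_{ijk}\epsilon^{i\ell m} = \delta^\ell_j \delta^m_k - \delta^\ell_k \delta^m_j$ and rearranging yields \eqref{magic2}. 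Finally, \eqref{lastid} is a direct consequence of \eqref{magic2} together with the explicit formula \eqref{deformation} for the deformation tensor: expanding $(\L_X g(X,\cdot))^\sharp$ in components as $g^{ij}(\nabla_j X_k + \nabla_k X_j)X^k$ and using metric compatibility re-expresses it in terms of $\nabla_X X$ and $\nabla_g |X|_g^2$, matching the claimed identity.

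The main obstacle is sign and index bookkeeping rather than any substantive analytic difficulty: one must be scrupulous about the sign of $*_g^2$ on forms of each degree, the orientation convention implicit in $\epsilon_{ijk}$, and the exact form of the interior-product identity $\iota_Y *_g \omega = \pm *_g(\omega \wedge Y^\flat)$. Once these conventions are pinned down consistently (for instance by computing $*_g(dx^i \wedge dx^j)$ on a coordinate basis), each identity above follows by a short direct check, with no ingredients beyond metric compatibility, Cartan's magic formula, and the dimension-three Hodge correspondence.
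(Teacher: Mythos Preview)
Your proposal is correct and follows essentially the same route as the paper: both translate the identities into the differential-form dictionary $\curl_g X = (*_g dX^\flat)^\sharp$, $(X\times_g Y)^\flat = *_g(X^\flat\wedge Y^\flat)$, and then reduce to coordinate (or normal-coordinate) computations. The only stylistic difference is that the paper leans slightly more on brute-force coordinate expansions---using the codifferential formula $\delta_g\omega_i = g^{kj}\nabla_j\omega_{ik}$ for \eqref{magic1}, a direct Christoffel expansion for \eqref{magic2point5}, and a direct $\nabla_i$ computation for \eqref{magic3}---whereas you organize the same computations around Cartan's formula $\L_X = d\iota_X + \iota_X d$; neither approach introduces any idea the other lacks.
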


\begin{proof}
 We begin by writing
\begin{equation}
 (\curl_g (X \times_g Y))^\flat =
 *_g d *_g (X^\flat \wedge Y^\flat)
 =  \delta_g (X^\flat \wedge Y^\flat).
 \label{}
\end{equation}
Using \eqref{deltag2form} and writing $X_i = g_{ij} X^j, Y_k = g_{k\ell} Y^\ell$,
\begin{equation}
 \delta_g (X^\flat \wedge Y^\flat)_i =
 g^{kj}\nabla_j (X_i Y_k - X_k Y_i)
 = X_i g^{kj} \nabla_j Y_k - Y_i g^{kj} \nabla_j X_k
 + Y_k g^{kj} \nabla_j X_i - X_k g^{kj}\nabla_j Y_i.
 \label{}
\end{equation}
The first two terms are $X_i \div_g Y - Y_i \div_g X$. If we raise the
index on the last two terms (using \eqref{metric}) and use \eqref{symmetry} then we see
\begin{equation}
  Y_k g^{kj} \nabla_j X^i - X_k g^{kj}\nabla_j Y^i
  = Y^j \nabla_j X^i - X^j \nabla_j Y^i
  = Y^j \pa_j X^i - X^j \pa_j Y^i,
 \label{}
\end{equation}
which gives \eqref{magic1}.
To prove \eqref{magic2} we start by computing
$X \times_g \curl_g X$. Writing $\beta = X^\flat$,
a direct calculation using \eqref{explicitcurl} shows that
\begin{equation}
 \beta \wedge (*_g d \beta)
 = \sqrt{|g|} g^{ik} g^{j\ell} \epsilon_{k\ell m} \nabla_i X_j X_n \, dx^n\wedge
 dx^m
 \label{}
\end{equation}
and that
\begin{equation}
 *_g\big( \beta \wedge (*_g d \beta)\big)
 = |g| g^{ik} g^{j\ell} g^{nr} g^{mq}
 \epsilon_{k \ell m} \epsilon_{r q p} \nabla_i X_j X_n \, dx^p
 = |g| g^{mq} \epsilon_{k\ell m} \epsilon_{rqp} \nabla^k X^\ell X^r  dx^p.
 \label{}
\end{equation}
The identity \eqref{magic2} follows at any given point $P$ after changing
coordinates near $P$ so that expressed in these coordinates, the metric
is given by $\textrm{diag}(1,1,1)$.

To prove \eqref{magic2point5}, we write
\begin{equation}
 (\L_X X^\flat)_j = X^i \pa_i X_j + X_i \pa_j X^i = X^i \pa_i X_j
 + \frac{1}{2} \pa_j \big( g_{i\ell} X^\ell X^i\big)
 - \frac{1}{2} \big(\pa_j g_{i\ell}\big)X^\ell X^i,
 \label{}
\end{equation}
and so it follows from the definition of the covariant derivative that
\begin{equation}
 \L_X X_j - \nabla_X X_j - \frac{1}{2} \pa_j|X|_g^2
 = \Gamma_{ij}^k X^i X_k -\frac{1}{2} \big(\pa_j g_{i\ell}\big)X^\ell X^i
 \label{}
\end{equation}
and expanding the definition of the Christoffel symbols, the right-hand side is
\begin{align}
 \Gamma_{ij}^k X^i X_k -\frac{1}{2} \big(\pa_j g_{i\ell}\big)X^\ell X^i
 &=
 \frac{1}{2} X^i X^\ell \big(\pa_i g_{\ell j} + \pa_j g_{\ell i} - \pa_\ell g_{ij}\big)
 -\frac{1}{2} \big(\pa_j g_{i\ell}\big)X^\ell X^i
 = 0.
 \label{}
\end{align}
To prove \eqref{magic3}, we use \eqref{metric} and write
\begin{align}
 \div_g (X\times_g Y)
 = \nabla_i (\sqrt{|g|} g^{ij} \epsilon_{jk\ell} X^k Y^\ell)
 &= Y^\ell (\sqrt{|g|} g^{ij} \epsilon_{jk\ell} \nabla_i X^k)
 + X^k (\sqrt{|g|} g^{ij} \epsilon_{jk\ell} \nabla_i Y^\ell)\\
 &= Y^k (\curl_g X)_k- X^k (\curl_g Y)_k.
\end{align}
The final identity \eqref{lastid} follows from \eqref{magic2}.
\end{proof}

The following identity involving $\times$ and $\times_g$
is crucial for proving that the vector field
${B}_g$ defined in \eqref{gBqs} possesses a flux function. This result follows directly from standard vector calculus identities
when $g = \delta$.
\begin{lemma}
  \label{timestimesglem}
 If $\varphi$ is a function with $\L_X \varphi = 0$, then with $\nabla $ denoting
 the Euclidean gradient,
 \begin{equation}
  X \times (X \times_g \nabla_g \varphi) = -\frac{1}{\sqrt{|g|}}|X|_g^2  \nabla \varphi.
  \label{timestimesg}
 \end{equation}
\end{lemma}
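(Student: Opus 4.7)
The hypothesis $\L_X\varphi = 0$ is key, and it can be read two ways. In arbitrary coordinates, $\L_X\varphi = X^i\partial_i\varphi$, so the Euclidean gradient satisfies $X \cdot \nabla \varphi = 0$. But the same computation also gives
\[
X \cdot_g \nabla_g\varphi = g_{ij}X^i (\nabla_g\varphi)^j = g_{ij} g^{jk} X^i \partial_k\varphi = X^k \partial_k\varphi = \L_X\varphi = 0,
\]
so $X$ is simultaneously Euclidean-orthogonal to $\nabla\varphi$ and $g$-orthogonal to $\nabla_g\varphi$. I would exploit both orthogonalities: the first to see that the right-hand side is what the usual BAC-CAB identity would produce in the flat case, and the second to carry out a BAC-CAB computation purely in the $g$-metric.

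Step 1 (work purely in the $g$-metric). I would first prove the auxiliary identity
\[
X \times_g (X \times_g \nabla_g\varphi) \;=\; -|X|_g^2\, \nabla_g\varphi.
\]
This is the standard triple cross product identity written in the $g$-metric: one applies \eqref{tripleproduct} (in its correctly oriented form $A \times_g (B \times_g C) = B\,(A\cdot_g C) - C\,(A\cdot_g B)$) with $A = B = X$ and $C = \nabla_g\varphi$, using $X\cdot_g \nabla_g\varphi = 0$ and $X \cdot_g X = |X|_g^2$. Since this identity is coordinate-free, it can be verified at any given point by a linear change of basis which diagonalizes $g$ to the Euclidean inner product, reducing it to the familiar Euclidean BAC-CAB; this is precisely the coordinate-change argument alluded to after Lemma \ref{lemgid}.

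Step 2 (convert the outer $\times_g$ into the Euclidean $\times$). Writing $W := X \times_g \nabla_g\varphi$, the identity \eqref{determinant} in Lemma \ref{lemgid} gives
\[
(X \times_g W)\cdot_g C \;=\; \sqrt{|g|}\,(X \times W)\cdot C
\]
for every vector $C$, where $\cdot$ denotes the Euclidean dot product. A direct contraction also yields
\[
(\nabla_g\varphi)\cdot_g C \;=\; g_{ij}g^{ik}\partial_k\varphi\, C^j \;=\; \partial_k\varphi\, C^k \;=\; \nabla\varphi \cdot C.
\]
Combining Step 1 with these two relations,
\[
\sqrt{|g|}\,(X \times W)\cdot C \;=\; (X\times_g W)\cdot_g C \;=\; -|X|_g^2\,(\nabla_g\varphi)\cdot_g C \;=\; -|X|_g^2\, \nabla\varphi\cdot C
\]
for every $C$. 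Non-degeneracy of the Euclidean inner product then gives $X \times W = -|X|_g^2/\sqrt{|g|}\ \nabla\varphi$, which is exactly \eqref{timestimesg}.

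The only nonroutine step is the $g$-BAC-CAB identity of Step 1, whose proof the paper defers to a pointwise change-of-basis argument. Everything else is a direct application of the standard lemmas on $\times_g$, $\cdot_g$, and $\nabla_g$ together with the careful observation that the hypothesis $\L_X\varphi = 0$ implies both Euclidean and $g$-orthogonality.
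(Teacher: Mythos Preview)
Your proof is correct and takes a genuinely different route from the paper's. The paper computes $X\times(X\times_g\nabla_g\varphi)$ directly in components, then at a fixed point introduces coordinates $Z^\alpha$ in which $g$ is diagonalized, pushes everything through the Jacobian of the coordinate change, and finishes with the Levi--Civita contraction identity $\delta^{bc}\epsilon_{cc'd}\epsilon_{beb'}=\delta_{c'e}\delta_{db'}-\delta_{c'b'}\delta_{de}$ together with $X\cdot\nabla\varphi=0$. Your argument is more structural: you first compute the purely $g$-metric triple product $X\times_g(X\times_g\nabla_g\varphi)=-|X|_g^2\nabla_g\varphi$ via the BAC--CAB identity (using only the $g$-orthogonality $X\cdot_g\nabla_g\varphi=0$), and then invoke the scalar identity \eqref{determinant} together with $(\nabla_g\varphi)\cdot_g C=\nabla\varphi\cdot C$ to swap the outer $\times_g$ for the Euclidean $\times$. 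The advantage of your approach is that it isolates exactly where the two metrics interact (only in the final pairing step) and avoids the lengthy coordinate manipulation entirely; the paper's computation, by contrast, is self-contained and does not logically rely on Lemma~\ref{lemgid}, whose proof of \eqref{tripleproduct} is in fact deferred to the very argument you are replacing. One minor remark: in your write-up the Euclidean orthogonality $X\cdot\nabla\varphi=0$ is mentioned as motivation but is never actually used---your argument runs entirely on the $g$-orthogonality.
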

\begin{proof}
  This follows from a straightforward but tedious argument; we include the details for the convenience of the reader.
  With $\omega$ the quantity on the left-hand side of \eqref{timestimesg},
  from the definitions we have
  \begin{equation}
   \omega^i = \sqrt{|g|} \delta^{ij}g^{\ell m} g^{pq} \epsilon_{jk\ell}\epsilon_{mnp}
   X^k X^n \pa_q \varphi.
   \label{omegaformula}
  \end{equation}

  Fix any $P \in M$ and choose coordinates
  $(Z^1, Z^2, Z^3)$ near $P$ so that at $P$, we have
  \begin{equation}
   g_{ij} \frac{\pa x^i}{\pa Z^\alpha} \frac{\pa x^j}{\pa Z^\beta} = \delta_{\alpha \beta}.
   \label{normalcoords}
  \end{equation}
   We note the following
  relation which will be useful in what follows: at $P$, we have
  \begin{equation}
   g^{\ell m} = \delta^{\alpha\beta}\frac{\pa x^\ell}{\pa Z^\alpha}\frac{\pa x^m}{\pa Z^\beta}.
   \label{inverse}
  \end{equation}

  Expressing $\omega$ in
  these coordinates, $\omega = \omega^a \pa_{Z^a}$ with $\omega^a = \frac{\pa Z^a}{\pa x^i}\omega^i$,
  evaluating at $P$ and using \eqref{inverse} to re-write $g^{\ell m}$ and
  $g^{ p q}$, from \eqref{omegaformula} we have
\begin{align}
 \omega^a
 &= \sqrt{|g|}\frac{\pa Z^a}{\pa x^i} \delta^{ij}g^{\ell m} g^{pq} \epsilon_{jk\ell}\epsilon_{mnp}
 X^k X^n \pa_q \varphi\\
 &= \sqrt{|g|} \frac{\pa Z^a}{\pa x^i}
 \frac{\pa x^\ell}{\pa Z^b} \frac{\pa x^m}{\pa Z^c} \frac{\pa x^p}{\pa Z^d}
 \frac{\pa x^k}{\pa Z^{b'}} \frac{\pa x^n}{\pa Z^{c'}} \delta^{bc} \delta^{dd'} \epsilon_{jk\ell}\epsilon_{mnp}
  X^{b'} X^{c'}
 \pa_{Z^{d'}}\varphi,
 \label{omegaa}
\end{align}
writing e.g. $X^{k} = \frac{\pa x^k}{\pa Z^{b'}} X^{b'}$.
Now we note that
\begin{align}
 \epsilon_{mnp} \frac{\pa x^m}{\pa Z^c} \frac{\pa x^n}{\pa Z^{c'}} \frac{\pa x^p}{\pa Z^d}
  &= \epsilon_{cc'd} \det (\pa_Z x),\qquad
  \epsilon_{jk\ell}  \frac{\pa x^j}{\pa Z^e} \frac{\pa x^k}{\pa Z^{b'}}
  \frac{\pa x^\ell}{\pa Z^b} = \epsilon_{eb'b}\det(\pa_Z x).
 \label{}
\end{align}
Indeed, the quantity on the left-hand side of e.g. the first
equality is antisymmetric in all three indices and so is a multiple of
$\epsilon_{cc'd}$ and evaluating at $c =1, c' = 2, d = 3$ gives the
result. Therefore \eqref{omegaa} reads
\begin{align}
 \omega^a &= \sqrt{|g|} \det(\pa_Zx)^2 \delta^{ij} \frac{\pa Z^a}{\pa x^i} \frac{\pa Z^e}{\pa x^j}
  \delta^{dd'}\delta^{bc}\epsilon_{cc'd}\epsilon_{beb'}
 X^{b'} X^{c'} \pa_{Z^{d'}} \varphi\\
 &=\sqrt{|g|} \det(\pa_Zx)^2 \delta^{ij} \frac{\pa Z^a}{\pa x^i} \frac{\pa Z^e}{\pa x^j}
  \delta^{dd'}
  \big( \delta_{c'e}\delta_{db'} - \delta_{c'b'}\delta_{de}\big)X^{b'} X^{c'} \pa_{Z^{d'}} \varphi,
 \label{}
\end{align}
using a well-known identity for the Levi-Civita symbol. Now we note that
\begin{equation}
 \delta^{dd'}\delta_{db'} \pa_{Z^{d'}}\varphi X^{b'} = X\cdot \nabla \varphi
  = 0,
 \label{}
\end{equation}
by assumption, and so
\begin{equation}
 \omega^a = -\sqrt{|g|} (\det \pa_Z x)^2 \delta^{ij} \frac{\pa Z^a}{\pa x^i}
 \frac{\pa Z^e}{\pa x^j} \pa_{Z^e} \varphi \delta_{c'b'} X^{c'} X^{b'}
 =-\sqrt{|g|} (\det \pa_Z x)^2 \delta^{ij} \frac{\pa Z^a}{\pa x^i}
 \pa_{x^j} \varphi |X|_g^2.
 \label{}
\end{equation}
From \eqref{normalcoords}, we have
$\sqrt{ |g|} (\det \pa_{Z}x)^2 = \frac{1}{\sqrt{|g|}}$ and so
at $P$ we find
\begin{equation}
 \omega^i = -\frac{1}{\sqrt{ |g| }} \delta^{ij} \pa_{x^j} \varphi |X|_g^2,
 \label{}
\end{equation}
and since $P$ was arbitrary we get the result.
\end{proof}

We finally record some useful formulae involving Lie derivatives along $g$ Killing fields.
We have

\begin{lemma}\label{crossLem}
Let $X,Y$ be vector fields and let $\xi$ be a Killing vector field for the metric $g$.  Then
\begin{align}\label{prodrule}
\L_\xi (X \times_g Y) &=   \L_{\xi} X \times_g Y + X \times_g \L_{\xi} Y,\\
 \label{curlcomm}
\L_{\xi} \curl_g X &= \curl_g  \L_{\xi}X .
\end{align}
\end{lemma}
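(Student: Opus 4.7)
The plan is to exploit the fact that $\times_g$ and $\curl_g$ are both built up from the operations $\flat$, $\sharp$, the exterior derivative $d$, the wedge product, and the Hodge star $\ast_g$, and that each of these operations commutes with $\mathcal{L}_\xi$ when $\xi$ is Killing. Once this commutation is established, both identities follow essentially by inspection.

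First I would record the following four facts. (i) The Lie derivative always commutes with exterior differentiation: $\mathcal{L}_\xi \circ d = d \circ \mathcal{L}_\xi$. (ii) The Lie derivative is a derivation with respect to $\wedge$, i.e.\ $\mathcal{L}_\xi(\alpha \wedge \beta) = \mathcal{L}_\xi \alpha \wedge \beta + \alpha \wedge \mathcal{L}_\xi \beta$. (iii) Since $\mathcal{L}_\xi g = 0$, raising and lowering indices commute with $\mathcal{L}_\xi$: using \eqref{flat}, $\mathcal{L}_\xi(X^\flat) = \mathcal{L}_\xi(g_{ij}X^j\,dx^i) = g_{ij}(\mathcal{L}_\xi X)^j\, dx^i = (\mathcal{L}_\xi X)^\flat$ (and analogously for $\sharp$), because the only extra terms produced by differentiating $g_{ij}$ and $dx^i$ together assemble into $(\mathcal{L}_\xi g)(X,\cdot) = 0$. (iv) The Riemannian volume form $d\mu_g = \sqrt{|g|}\,dx^1\wedge dx^2\wedge dx^3$ is preserved, $\mathcal{L}_\xi d\mu_g = 0$, because a Killing field is divergence-free (the trace of $\mathcal{L}_\xi g$ with respect to $g$ vanishes), and Cartan's formula gives $\mathcal{L}_\xi d\mu_g = d(\iota_\xi d\mu_g) = (\div_g \xi)\, d\mu_g = 0$. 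Facts (iii) and (iv) together imply that $\mathcal{L}_\xi$ commutes with the Hodge star $\ast_g$.

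With these ingredients in hand, for \eqref{prodrule} I would unfold the definition \eqref{timesgdef}:
\begin{equation}
\mathcal{L}_\xi(X\times_g Y) = \mathcal{L}_\xi\bigl((\ast_g\, X^\flat\wedge Y^\flat)^\sharp\bigr) = \bigl(\ast_g\,\mathcal{L}_\xi(X^\flat\wedge Y^\flat)\bigr)^\sharp,
\end{equation}
using (iii) and the commutation with $\ast_g$. Applying the derivation property (ii) and then (iii) again to move $\mathcal{L}_\xi$ past the flats gives
\begin{equation}
\bigl(\ast_g\,((\mathcal{L}_\xi X)^\flat\wedge Y^\flat + X^\flat\wedge (\mathcal{L}_\xi Y)^\flat)\bigr)^\sharp = \mathcal{L}_\xi X\times_g Y + X\times_g \mathcal{L}_\xi Y,
\end{equation}
which is the desired identity.

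For \eqref{curlcomm} the argument is even shorter: from \eqref{curlgdef},
\begin{equation}
\mathcal{L}_\xi \curl_g X = \mathcal{L}_\xi\bigl((\ast_g\, dX^\flat)^\sharp\bigr) = (\ast_g\, d\,\mathcal{L}_\xi X^\flat)^\sharp = (\ast_g\, d(\mathcal{L}_\xi X)^\flat)^\sharp = \curl_g \mathcal{L}_\xi X,
\end{equation}
where I used, in order, commutation with $\sharp$, with $\ast_g$, with $d$, and finally with $\flat$. The only real content of the proof is the Killing assumption, which is used precisely to get $\mathcal{L}_\xi$ past $\flat$, $\sharp$, and $\ast_g$; the main place to be careful is verifying (iii) and (iv), since they are the step that fails for a non-Killing $\xi$. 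I would write these out explicitly in coordinates as the single nontrivial calculation, after which the two claimed identities follow immediately from the chain-of-operations descriptions of $\times_g$ and $\curl_g$.
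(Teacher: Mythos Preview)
Your proposal is correct and follows essentially the same approach as the paper: both arguments reduce the identities to the commutation of $\L_\xi$ with $\flat$, $\sharp$, $d$, $\wedge$, and $\ast_g$, and then unfold the definitions \eqref{timesgdef} and \eqref{curlgdef}. The only cosmetic difference is that the paper justifies $\L_\xi \ast_g = \ast_g \L_\xi$ via the pullback identity $\Phi_s^\ast \ast_g = \ast_{\Phi_s^\ast g}\Phi_s^\ast$ for the flow of $\xi$, whereas you deduce it from invariance of the metric and volume form; these are equivalent standard arguments.
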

\begin{proof}
To prove \eqref{prodrule}, we start  from the following fact,
which can be found on page 177 of \cite{F06}. If $\xi_0$ is a Killing field for a metric $g$,
$\L_{\xi}g = 0$, then
$
 \L_{\xi} *_g \alpha = *_g\L_{\xi} \alpha.
$
Similarly,
$
 \L_{\xi} X^\flat = (\L_{\xi} X)_{\flat},
 \label{}
$
where $\flat$ denotes lowering indices with $g$.
For any vector field $\xi$, if $\Phi_s$ denotes
its flow, we have the following identity
$
 \Phi_s^* *_g \alpha = *_{\Phi_s^*g} \Phi_s^* \alpha.
$
If $\xi$ is a Killing field for $g$ then this becomes
$
 \Phi_s^* *_g = *_g \Phi_s^* \alpha.
$
Differentiating this
at $s = 0$ and using the definition of the Lie derivative gives the result.
Now, to get the formula for $\L_\xi (X \times_g Y)$ we then recall that
$
 (X \times_g Y)_{\flat} =
 *_g(X^\flat \wedge Y^\flat).
$
Using  that $\L_{\xi}$
commutes with $\sharp$, $\flat$, and $*_g$,
\begin{align}
 \L_{\xi} (X \times_g Y)
 = \L_{\xi} *_g\big( X^\flat \wedge Y_{\flat})^\sharp
 = *_g\big(\L_{\xi} (X_{\flat} \wedge Y_{\flat} )\big)^\sharp
 &= *_g\big( (\L_{\xi}X)^\flat \wedge Y_{\flat} + X_{\flat}
 \wedge (\L_{\xi} Y)_{\flat}\big)^\sharp\\
 &=
 \L_{\xi} X \times_g Y + X \times_g \L_{\xi} Y.
 \label{}
\end{align}
To prove \eqref{curlcomm}, recall that
$
 \curl_g X = (*_g d X^\flat)^\sharp,
$
where $\flat$ and $\sharp$ denote
lowering and raising the index with $g$. Recall that if $\L_{\xi} g= 0$
then
\begin{equation}
 \L_{\xi} *_g = *_g\L_{\xi}, \qquad
 \L_{\xi} X^\flat = (\L_{\xi} X^\flat),
 \qquad
 \L_{\xi} \alpha^\sharp = (\L_{\xi}\alpha)^\sharp.
 \label{commute}
\end{equation}
After lowering the index on $\curl_g X$ and using the fact that Lie derivatives
commute with exterior differentiation, $\L_{\xi} d = d \L_{\xi}$, we obtain
$
 \L_{\xi} *_g (d X_{\flat})
 = *_g d (\L_{\xi} X)_{\flat}.
$
Raising the index with $g$ and using \eqref{commute} again we get the result.
\end{proof}

\section{Generalized quasisymmetric Grad-Shafranov equation}\label{appendproof}
\label{structural}

In this section we summarize the relationship between quasisymmetry and the MHS equation \eqref{steadymhd}.
Recall that the deformation tensor $\L_\xi \delta$ is defined by
\be  \label{Ldel}
(\L_\xi \delta  )(X,Y) =X\cdot ( \nabla \xi + (\nabla \xi)^T) \cdot Y.
\ee

We begin by showing that the ansatz \eqref{gBqs} is automatically Euclidean divergence-free, has flux surfaces and is quasisymmetric until a further conditions.
\begin{prop}
  [Characterization of quasisymmetric $B_g$ MHS solutions] \label{propqsss}
Let $\xi$ be a non-vanishing and divergence-free vector field tangent to $\partial T$ and let $g$ be any metric with $\L_\xi g = 0$.
 Let $\psi:T\to \mathbb{R}$ satisfy $\L_\xi \psi=0$ and $|\nabla \psi|>0$.
Then $B_g$ given by \eqref{gBqs} is (Euclidean) divergence-free, satisfies
\eqref{qs2intro} and is tangent to $\partial T$.
Moreover $B_g$ is weakly quasisymmetric if and only  if
\be\label{magBcond}
 (\L_\xi \delta) (\xi ,\xi)  +2 C^{-1}(\psi)  (\L_\xi \delta) (\xi ,\nabla_g^\perp \psi ) +C^{-2}(\psi) (\L_\xi \delta) (\nabla_g^\perp \psi ,\nabla_g^\perp \psi )=0.
\ee
The field $B_g$ additionally solves MHS with forcing $f$ if and only if
$
 f \cdot_g \nabla_g^\perp \psi = f \cdot_g \xi = 0,$
and $\psi$ satisfies the generalized Grad-Shafranov equation
\begin{equation}
  \div_g\bigg(\sqrt{|g|} \frac{\nabla_g \psi}{|\xi|_g^2} \bigg) - C(\psi)
 \frac{\xi}{|\xi|_g^2} \cdot_g \curl_g
 \left(\frac{\xi}{|\xi|_g^2}\right)
+ \frac{C(\psi)C'(\psi)}{\sqrt{|g|}|\xi|_g^2}+
\frac{P'(\psi)}{\sqrt{|g|}}
= \frac{f\cdot_g \nabla_g \psi}{\sqrt{|g|}|\nabla_g \psi|_g^2}.
\label{gengsapp}
\end{equation}

\end{prop}

This section will build up to the proof of  Proposition \ref{propqsss} by developing the following Lemmas \ref{propBlem}--\ref{curlBBlem}.  The proof is a straightforward combination of these results.
First we record some elementary vector identities.
\begin{lemma}\label{baselem}
  Fix a metric $g$. Let $\xi$ be a vector field with $|\xi| \neq 0$
  and let $\psi: T\to \mathbb{R}$ be a function satisfying $\L_\xi \psi = 0$. Then we have
 \be
 \xi \times_g \nabla_g \psi = \nabla_g^\perp \psi, \qquad \nabla_g\psi \times_g \nabla_g^\perp \psi =|\nabla_g\psi|_g^2 \xi , \qquad \nabla_g^\perp \psi \times_g \xi =  |\xi|_g^2 \nabla_g\psi.
 \label{basisidentities}
 \ee
 where we have introduced $\nabla_g^\perp \psi= \xi \times_g \nabla_g \psi$.
  Thus, the triple $( \nabla_g \psi, \nabla^\perp_g \psi, \xi)$ forms an orthogonal basis of $\mathbb{R}^3$  at each $x\in T$ where $|\nabla_g \psi|_g>0$.
\end{lemma}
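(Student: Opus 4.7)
The plan is to treat the three identities in turn and then read off the orthogonal basis conclusion; the inputs will be the triple product identity \eqref{tripleproduct}, the determinant formula \eqref{determinant}, and the single observation that $\xi \cdot_g \nabla_g \psi = 0$. The first identity is the definition of $\nabla_g^\perp \psi$ announced at the end of the statement, so there is nothing to verify.

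The key preliminary step is the computation $\xi \cdot_g \nabla_g \psi = \L_\xi \psi = 0$. In any coordinate system,
\begin{equation}
\xi \cdot_g \nabla_g \psi = g_{ij} \xi^i (\nabla_g \psi)^j = g_{ij} \xi^i g^{jk} \pa_k \psi = \xi^k \pa_k \psi = \L_\xi \psi,
\end{equation}
and this vanishes by hypothesis.

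The remaining two identities then reduce to single applications of \eqref{tripleproduct} (together with the antisymmetry $A \times_g B = -B \times_g A$). For the third identity I would expand $(\xi \times_g \nabla_g \psi) \times_g \xi$ using \eqref{tripleproduct}: the result is a linear combination of $\xi$ and $\nabla_g \psi$ in which the coefficient of $\xi$ is proportional to $\xi \cdot_g \nabla_g \psi$ and hence drops, leaving the coefficient of $\nabla_g \psi$ equal to $|\xi|_g^2$. The second identity $\nabla_g \psi \times_g \nabla_g^\perp \psi = |\nabla_g \psi|_g^2 \xi$ is obtained by precisely the same manipulation applied to $\nabla_g \psi \times_g (\xi \times_g \nabla_g \psi)$, again using $\xi \cdot_g \nabla_g \psi = 0$ to discard the offending term.

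Finally, for the orthogonal basis claim, pairwise $g$-orthogonality of $\{\nabla_g \psi, \nabla_g^\perp \psi, \xi\}$ follows at once: the relation $\xi \cdot_g \nabla_g \psi = 0$ was already established, while $\nabla_g^\perp \psi \cdot_g \xi = 0$ and $\nabla_g^\perp \psi \cdot_g \nabla_g \psi = 0$ follow from the total antisymmetry of $(X \times_g Y) \cdot_g Z$ visible from \eqref{determinant}. The three vectors are nonzero: $\xi$ and $\nabla_g \psi$ by the standing hypotheses, and $|\nabla_g^\perp \psi|_g^2 = |\xi|_g^2 |\nabla_g \psi|_g^2 - (\xi \cdot_g \nabla_g \psi)^2 = |\xi|_g^2 |\nabla_g \psi|_g^2 > 0$ by the metric Lagrange identity (which itself follows from \eqref{tripleproduct} and \eqref{determinant}). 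The only place where attention is required is the sign bookkeeping in applying \eqref{tripleproduct}; this can be cross-checked at a point by passing to $g$-orthonormal coordinates, where $\times_g$ reduces to the Euclidean cross product and the identities become familiar.
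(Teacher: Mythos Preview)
Your proof is correct and follows the same route as the paper, which dispatches the lemma in one line: ``Follows from the identity \eqref{tripleproduct}.'' You have simply unpacked that line, making explicit the preliminary fact $\xi \cdot_g \nabla_g \psi = \L_\xi \psi = 0$ and spelling out the orthogonality and nonvanishing claims that the paper leaves implicit.
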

\begin{proof}
Follows from the identity \eqref{tripleproduct}.
\end{proof}

The following are the main results in this section and are proved at
the end of the section.
\begin{lemma}[Structural properties of $B_g$] \label{propBlem1}
  Fix a metric $g$. Let $\xi: T\to \mathbb{R}^3$ be a (Euclidean) divergence-free vector field with $|\xi|_g \neq 0$ which is tangent to $\partial T$.
Let $\psi: T\to \mathbb{R}$ be a function satisfying $\L_\xi \psi = 0$ which is constant on $\partial T$.
   Fix  $C:\mathbb{R}\to \mathbb{R}$.
   Then $B_g$ defined in \eqref{gBqs} satisfies
   \begin{align}
   \xi\times {B}_g   &=-\nabla \psi, \label{fluxbg1}
   \\
    \div B_g &=   -\frac{1}{|\xi|_g^2} (\L_\xi g)(\xi, B_g),\label{divergencefree}
    \\ \label{symm}
        \L_\xi B_g &=  -\frac{1}{|\xi|_g^2} (\L_\xi g)(\xi, B_g) \xi\\
        \label{tang}
       B_g\cdot \hat{n}|_{\partial T}  &= 0.
   \end{align}
\end{lemma}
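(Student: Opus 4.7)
The four claims cascade once the first identity \eqref{fluxbg1} is established; the key input is Lemma~\ref{timestimesglem}. I would begin by expanding $\xi \times B_g$ directly from \eqref{gBqs}: the $C(\psi)\xi$ summand dies by $\xi \times \xi = 0$, and Lemma~\ref{timestimesglem} (applicable because $\L_\xi \psi = 0$) evaluates $\xi \times (\xi \times_g \nabla_g \psi) = -\tfrac{1}{\sqrt{|g|}} |\xi|_g^2 \nabla \psi$. The $\sqrt{|g|}$ factor and the $|\xi|_g^2$ prefactor cancel perfectly, yielding \eqref{fluxbg1}.

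The tangency \eqref{tang} is then essentially a one-line consequence: $\psi$ is constant on $\partial T$, so $\nabla \psi$ is parallel to the outward normal $\hat n$ there, hence $\xi \times B_g \parallel \hat n$ on $\partial T$. Combined with the hypothesis $\xi \cdot \hat n = 0$, the plane spanned by $\xi$ and $B_g$ is orthogonal to $\hat n$ (with the degenerate case $B_g \parallel \xi$ handled directly by tangency of $\xi$), forcing $B_g \cdot \hat n = 0$.

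For \eqref{symm}, I would apply the Euclidean curl to both sides of the already-proved \eqref{fluxbg1}. The right-hand side is curl-free, while the standard identity $\curl(X \times Y) = X\div Y - Y\div X - \L_X Y$, specialized to $X = \xi$, $Y = B_g$ and combined with $\div \xi = 0$, reduces the left-hand side to $\xi\div B_g - \L_\xi B_g$. Thus $\L_\xi B_g = (\div B_g)\,\xi$, which already has the structural form of \eqref{symm}, reducing everything to computing $\div B_g$.

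Finally, to prove \eqref{divergencefree} (which then closes \eqref{symm}), I would take the $g$-inner product of $\L_\xi B_g = (\div B_g)\xi$ with $\xi$, yielding $|\xi|_g^2 \div B_g = g(\L_\xi B_g, \xi)$. The Leibniz rule \eqref{product1} applied to $\L_\xi (g(B_g, \xi))$, together with $\L_\xi \xi = 0$, rewrites this as $\L_\xi(g(B_g, \xi)) - (\L_\xi g)(B_g, \xi)$. A direct calculation from \eqref{gBqs} gives $B_g \cdot_g \xi = C(\psi)$: the first summand contributes $C(\psi)$ and the $g$-triple product $(\xi \times_g \nabla_g \psi)\cdot_g \xi$ vanishes by identity \eqref{determinant}. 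Since $\L_\xi C(\psi) = C'(\psi) \L_\xi \psi = 0$, I conclude $g(\L_\xi B_g, \xi) = -(\L_\xi g)(\xi, B_g)$, giving \eqref{divergencefree}. The main source of care throughout (rather than a genuine obstacle) is the interleaving of Euclidean versus $g$-geometric quantities; the crucial bridge making everything consistent is the Euclidean-vs-$g$ cross-product identity supplied by Lemma~\ref{timestimesglem} in the opening step.
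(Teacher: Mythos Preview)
Your proof is correct, and for \eqref{fluxbg1} and the structural relation $\L_\xi B_g = (\div B_g)\xi$ you argue exactly as the paper does (the paper also curls \eqref{fluxbg1} and invokes $\div\xi=0$). Your treatment of \eqref{tang} is a mild variant: the paper computes $(\xi\times_g\nabla_g\psi)\cdot\hat n$ directly, using the pointwise identity $X\cdot\nabla f = X\cdot_g\nabla_g f$ to recognize this as a $g$-triple product with a repeated entry, whereas you read it off from the already-proved \eqref{fluxbg1}. Both are one-liners.

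The genuinely different step is \eqref{divergencefree}. The paper proves it \emph{before} \eqref{symm}, by first computing $\div_g B_g$ term-by-term via the auxiliary identities \eqref{divform1}--\eqref{divform2} of Lemma~\ref{auxlem} and then converting to the Euclidean divergence through $\div X = \div_g X - \tfrac{1}{\sqrt{|g|}}\L_X\sqrt{|g|}$; several $\L_\xi\sqrt{|g|}$ terms must be tracked and cancelled. You instead bootstrap from the relation $\L_\xi B_g = (\div B_g)\xi$ by taking the $g$-inner product with $\xi$ and invoking the Leibniz rule \eqref{product1} for $\L_\xi\big(g(B_g,\xi)\big)$, reducing the problem to the observation $B_g\cdot_g\xi = C(\psi)$ (immediate from \eqref{determinant}) and $\L_\xi C(\psi)=0$. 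Your route is shorter and self-contained, bypassing Lemma~\ref{auxlem} entirely for this identity; the paper's route has the compensating virtue that the same auxiliary computations feed into the curl formula of Lemma~\ref{propBlem}, so the extra work is amortized there.
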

For the proof, see Section \ref{propBlem1pfsec}. The crucial point here
is despite the fact that $B_g$ is defined in terms of an arbitrary metric
$g$, the identities \eqref{fluxbg1} and \eqref{divergencefree} involve the
Euclidean metric.

We now begin the derivation of the Grad-Shafranov equation \eqref{gGS} which
involves a somewhat lengthy calculation using the above identities. The most
important and complicated ingredient is the following formula, which is
a direct consequence of Lemma \ref{auxlem} below.
\begin{lemma}[Curl of $B_g$]  \label{propBlem}
  Fix a metric $g$. Let $\xi$ be a vector field with $|\xi| \neq 0$
  and let $\psi: T\to \mathbb{R}$ be a function satisfying $\L_\xi \psi = 0$.
   Fix a function $C:\mathbb{R}\to \mathbb{R}$.
   Then $B_g$ defined in \eqref{gBqs} satisfies
   \begin{align}
    \curl_g B_g &= F \nabla_g \psi + G \nabla^\perp_g \psi + H \xi,
    \label{curlID}
   \end{align}
  where $\curl_g$ is with respect to the
   metric $g$, defined in \eqref{curlgdef}, and
with $F, G, H$ defined by
   \begin{align}
F &:=-\frac{\sqrt{|g|}}{|\xi|_g^4 |\nabla_g \psi|_g^2}
\bigg[\frac{C(\psi)}{\sqrt{|g|}}(\L_\xi g)(\xi, \nabla_g^\perp \psi) + 2 |\xi|_g^2 |\nabla_g \psi|_g^2 \frac{\L_\xi \sqrt{|g|}}{\sqrt{|g|}} \\
&\qquad\qquad\qquad\qquad  - |\xi|_g^2 (\L_\xi g)(\nabla_g \psi, \nabla_g \psi)
 - |\nabla_g \psi|_g^2 (\L_\xi g)(\xi,\xi) \bigg],\label{Fform}
\\ \label{Gform}
G
&:=
\frac{\sqrt{|g|}}{|\xi|_g^2} \frac{ 1}{|\nabla_g \psi|_g^2} \bigg[  (\L_\xi g)(B_g, \nabla_g \psi)
- |\nabla_g\psi|_g^2 \frac{C'(\psi)}{\sqrt{|g|}} \bigg] ,
\\ \label{Hform}
H&:= \div_g\bigg(\sqrt{|g|} \frac{\nabla_g \psi}{|\xi|_g^2} \bigg) + \frac{1}{|\xi|^4_g}
C(\psi)
\xi \cdot_g \curl_g \xi + \frac{\sqrt{|g|}}{|\xi|_g^2} (\L_\xi g)(\nabla_g \psi, \xi).
   \end{align}
\end{lemma}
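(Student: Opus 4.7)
The plan is to split
\[
B_g = \tfrac{C(\psi)}{|\xi|_g^2}\xi \;+\; \tfrac{\sqrt{|g|}}{|\xi|_g^2}\,\xi\times_g\nabla_g\psi =: B_g^{(1)} + B_g^{(2)},
\]
compute $\curl_g$ of each piece via the product rule \eqref{product} combined with the identities of Lemma \ref{lemmag}, and then decompose the result against the $g$-orthogonal frame $(\nabla_g\psi,\nabla_g^\perp\psi,\xi)$ provided by Lemma \ref{baselem} (valid wherever $|\nabla_g\psi|_g>0$). The coefficients $F,G,H$ are extracted by taking $g$-inner products with each basis element and dividing by its squared $g$-norm; the formulas \eqref{Fform}--\eqref{Hform} are then read off using the orthogonality identities \eqref{basisidentities}.

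For $B_g^{(1)}$, \eqref{product} gives $\curl_g B_g^{(1)} = \nabla_g\!\big(C(\psi)/|\xi|_g^2\big)\times_g\xi + (C(\psi)/|\xi|_g^2)\curl_g\xi$. The chain-rule expansion of the first summand yields a $C'(\psi)\nabla_g\psi\times_g\xi = -C'(\psi)\nabla_g^\perp\psi$ term (the source of the $C'(\psi)$ contribution to $G$) plus $-C(\psi)|\xi|_g^{-4}\,\nabla_g|\xi|_g^2\times_g\xi$; the latter I reshape using identity \eqref{lastid} with $X=\xi$, which is precisely what produces the $(\L_\xi g)(\xi,\xi)$ and $(\L_\xi g)(\xi,\nabla_g^\perp\psi)$ contributions to $F$. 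For $B_g^{(2)}$, \eqref{product} gives a gradient term which I simplify via the triple product \eqref{tripleproduct} (using $\xi\cdot_g\nabla_g\psi=0$), plus $(\sqrt{|g|}/|\xi|_g^2)\,\curl_g(\xi\times_g\nabla_g\psi)$; identity \eqref{magic1} reduces the latter curl to $\xi\,\div_g\nabla_g\psi - \nabla_g\psi\,\div_g\xi + \L_{\nabla_g\psi}\xi$. The crucial observation, which the referenced Lemma \ref{auxlem} packages, is that since $\L_\xi\psi=0$ one has $\L_{\nabla_g\psi}\xi=-\L_\xi\nabla_g\psi$, and the failure of $\sharp$ to commute with $\L_\xi$ converts this into a contraction of $\L_\xi g$ against $\nabla_g\psi$; this is the sole mechanism by which $\L_\xi g$ enters $F,G,H$.

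Projecting onto the frame then reads off the three coefficients: $F$ collects all the deformation-tensor pieces arising from \eqref{lastid} and from the $\L_\xi\nabla_g\psi$ rewrite, $G$ combines the $C'(\psi)$ term with $(\L_\xi g)(B_g,\nabla_g\psi)$, and $H$ assembles (after recognizing $\div_g \xi=|g|^{-1/2}\pa_i(|g|^{1/2}\xi^i)$) the Leibniz complements of $\div_g(\sqrt{|g|}\nabla_g\psi/|\xi|_g^2)$, together with the $C(\psi)\xi\cdot_g\curl_g\xi/|\xi|_g^4$ and $(\L_\xi g)(\nabla_g\psi,\xi)$ terms appearing in \eqref{Hform}. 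The main obstacle is purely bookkeeping: three scalar prefactors $C(\psi)$, $|\xi|_g^{-2}$, $\sqrt{|g|}$ each contribute Leibniz gradients that must be reassembled into a single $g$-divergence, while every surviving Lie-derivative term must be correctly identified with a component of $\L_\xi g$ in the chosen orthogonal frame. Proving the curl formula for $\xi\times_g\nabla_g\psi$ as the stand-alone auxiliary Lemma \ref{auxlem} first, and then deriving \eqref{curlID} as a direct corollary, is the cleanest organization and isolates this bookkeeping from the rest of the argument.
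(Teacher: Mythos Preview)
Your proposal is correct and follows essentially the same route as the paper: the paper's proof of this lemma is literally the one-line statement that it is a direct consequence of Lemma~\ref{auxlem}, whose Steps~3 and~4 compute $\curl_g(C(\psi)\xi/|\xi|_g^2)$ and $\curl_g(\sqrt{|g|}\nabla_g^\perp\psi/|\xi|_g^2)$ via exactly the product rule, \eqref{magic1}, the identity \eqref{magicidentity} (your \eqref{lastid}), and the commutator \eqref{commu} that you describe. One small inaccuracy in your narrative: the commutator of $\L_\xi$ with $\sharp$ is not the \emph{sole} source of $\L_\xi g$ terms---as you yourself note earlier, the $\nabla_g|\xi|_g^2\times_g\xi$ rewrite via \eqref{lastid} independently produces the $(\L_\xi g)(\xi,\cdot)$ contributions---but this does not affect the argument.
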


\begin{lemma}[MHS for $B_g$]\label{curlBBlem}
  Fix a metric $g$. Let $\xi$ be a vector field with $|\xi| \neq 0$
  and let $\psi: T\to \mathbb{R}$ be a function satisfying $\L_\xi \psi = 0$.
  Fix a function $C:\mathbb{R}\to \mathbb{R}$.       Then $B_g$ defined in \eqref{gBqs} satisfies
 \begin{align}
   \curl_g B_g \times_g B_g - \nabla_g P
   &=
  \left(C(\psi) G -H- P'\right) \nabla_g \psi
   -  \frac{C(\psi)}{|\xi|_g^2}F \nabla_g^\perp \psi
   +  \frac{|\nabla_g\psi|_g^2}{|\xi|_g^2 } F\xi,
   \label{JtimesB}
 \end{align}
   with $F$, $G$ and $H$ defined by \eqref{Fform}, \eqref{Gform} and  \eqref{Hform}.
In particular, if $\L_\xi g = 0$ then $B$ satisfies the MHS equation with
force $f$
\begin{equation}
 \curl_g B_g \times_g B_g = \nabla_g P + f,
 \label{}
\end{equation}
if and only if
$
 f \cdot_g \nabla_g^\perp \psi = f \cdot_g \xi = 0,$
and $\psi$ satisfies the generalized Grad-Shafranov equation
\begin{equation}
  \div_g\bigg(\sqrt{|g|} \frac{\nabla_g \psi}{|\xi|_g^2} \bigg) - C(\psi)
 \frac{\xi}{|\xi|_g^2} \cdot_g \curl_g
 \left(\frac{\xi}{|\xi|_g^2}\right)
+ \frac{C(\psi)C'(\psi)}{\sqrt{|g|}|\xi|_g^2}+
\frac{P'(\psi)}{\sqrt{|g|}}
= \frac{f\cdot_g \nabla_g \psi}{\sqrt{|g|}|\nabla_g \psi|_g^2}.
 \label{gs}
\end{equation}

 \end{lemma}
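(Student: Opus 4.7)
The plan is to combine the decomposition of $\curl_g B_g$ from Lemma~\ref{propBlem} with the orthogonal frame from Lemma~\ref{baselem}, expand the cross product $\curl_g B_g \times_g B_g$ term by term, and then read off the ``if and only if'' from the resulting coordinate expression, exploiting the fact that $\L_\xi g = 0$ forces the coefficient $F$ to vanish.

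First I would rewrite $B_g$ in the orthogonal frame via $\xi\times_g\nabla_g\psi=\nabla_g^\perp\psi$ from Lemma~\ref{baselem}, obtaining $B_g=|\xi|_g^{-2}\bigl(C(\psi)\,\xi+\sqrt{|g|}\,\nabla_g^\perp\psi\bigr)$. Writing $\curl_g B_g = F\nabla_g\psi+G\nabla_g^\perp\psi+H\xi$ as in Lemma~\ref{propBlem} and expanding $\curl_g B_g\times_g B_g$ by bilinearity yields six terms; two vanish trivially ($\nabla_g^\perp\psi\times_g\nabla_g^\perp\psi=0$ and $\xi\times_g\xi=0$), and the remaining four collapse via the three $\times_g$ identities in Lemma~\ref{baselem} together with the derived relation $\xi\times_g\nabla_g^\perp\psi=-|\xi|_g^2\,\nabla_g\psi$, obtained from the triple-product identity \eqref{tripleproduct} once one uses $\xi\cdot_g\nabla_g\psi=\L_\xi\psi=0$. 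Subtracting $\nabla_g P=P'(\psi)\,\nabla_g\psi$ and collecting the three components then yields \eqref{JtimesB}.

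For the ``if and only if'', I would first note that $(\nabla_g\psi,\nabla_g^\perp\psi,\xi)$ is $g$-orthogonal wherever $|\nabla_g\psi|_g>0$: the pairing $\xi\cdot_g\nabla_g\psi$ equals $\L_\xi\psi=0$, while the other two are zero by antisymmetry of the triple product via \eqref{determinant}. Projecting the balance $\curl_g B_g\times_g B_g-\nabla_g P=f$ onto this frame therefore decouples it into three scalar equations. Inspecting \eqref{Fform} term by term, every summand of $F$ contains either a factor $(\L_\xi g)(\cdot,\cdot)$ or $\L_\xi\sqrt{|g|}$, both of which vanish when $\L_\xi g=0$ (the latter because $|g|=\det g$ is built from $g$). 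Hence $F\equiv 0$, which forces the $\nabla_g^\perp\psi$ and $\xi$ components of the left-hand side of \eqref{JtimesB} to vanish, giving $f\cdot_g\nabla_g^\perp\psi=f\cdot_g\xi=0$. The remaining $\nabla_g\psi$ component $C(\psi)G-H-P'(\psi)$ must equal $f\cdot_g\nabla_g\psi/|\nabla_g\psi|_g^2$, and substituting the $\L_\xi g=0$ simplifications of $G$ and $H$ from \eqref{Gform}--\eqref{Hform} produces the generalized Grad--Shafranov equation \eqref{gs}, after using the product rule \eqref{product} together with \eqref{determinant} to rewrite $\tfrac{\xi}{|\xi|_g^2}\cdot_g\curl_g\bigl(\tfrac{\xi}{|\xi|_g^2}\bigr)=|\xi|_g^{-4}\,\xi\cdot_g\curl_g\xi$.

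The main obstacle I expect is the bookkeeping of the $\sqrt{|g|}$ and $|\xi|_g^2$ weights through the six-term cross-product expansion, together with the two small algebraic reductions mentioned above (the triple product collapsing $\xi\times_g\nabla_g^\perp\psi$, and the product-rule identity simplifying $\curl_g(\xi/|\xi|_g^2)$). Apart from those, the argument is a mechanical application of the identities already catalogued in Lemmas~\ref{baselem}, \ref{lemgid}, \ref{lemmag}, and \ref{propBlem}; no substantive new ingredients beyond careful algebra are needed.
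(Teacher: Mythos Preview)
Your proposal is correct and is essentially the paper's own proof, spelled out in detail: the paper simply says the result ``follows from Lemma~\ref{propBlem}, \eqref{baselem}, standard vector identities and $\L_\xi\psi=0$,'' and you are carrying out exactly that computation. The only caveat is the $\sqrt{|g|}$ bookkeeping you already flag---tracking those factors through the cross-product expansion and into \eqref{gs} is indeed the one place requiring care, but no ingredient beyond what you list is needed.
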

\begin{proof}
Follows from Lemma \ref{propBlem}, \eqref{baselem},  standard vector identities and $\L_\xi \psi=0$.
\end{proof}

The generalized Grad--Shafranov equation \eqref{gs} for vector
fields of the form \eqref{gBqs} was first derived in
\cite{Burby} when $g$ was taken to be the circle-averaged metric.

\begin{lemma}[Quasisymmetry of $B_g$]\label{propBLem}
  Fix a metric $g$ with $\L_\xi g = 0$. Let $\xi$ be a vector field with $|\xi| \neq 0$
  and let $\psi: T\to \mathbb{R}$ be a function satisfying $\L_\xi \psi = 0$.
  Fix  $C:\mathbb{R}\to \mathbb{R}$.    Then $B_g$  satisfies
\be
\L_\xi |B_g|^2 = \frac{C^2(\psi)}{|\xi|_g^4} \bigg[ (\L_\xi \delta) (\xi ,\xi)  +2 C^{-1}(\psi)  (\L_\xi \delta) (\xi ,\nabla_g^\perp \psi ) +C^{-2}(\psi) (\L_\xi \delta) (\nabla_g^\perp \psi ,\nabla_g^\perp \psi ) \bigg].
\ee
\end{lemma}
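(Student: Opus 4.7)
The plan is to reduce the computation of $\L_\xi |B_g|^2$ to a one-line Leibniz argument, using the crucial observation that when $\L_\xi g = 0$, the vector field $B_g$ itself is $\xi$-invariant. First, I would invoke formula \eqref{symm} from Lemma \ref{propBlem1}, which says
\[
\L_\xi B_g = -\frac{1}{|\xi|_g^2}(\L_\xi g)(\xi, B_g)\,\xi.
\]
Under the standing hypothesis $\L_\xi g = 0$ this yields $\L_\xi B_g = 0$ outright.

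Next, since $|B_g|^2 = \delta(B_g, B_g)$ is a scalar built by contracting a fixed $(0,2)$ tensor (the Euclidean metric) with two copies of $B_g$, I would apply the Leibniz rule for the Lie derivative of a tensor evaluated on vector fields:
\[
\L_\xi\bigl[\delta(B_g, B_g)\bigr] = (\L_\xi\delta)(B_g, B_g) + 2\,\delta(\L_\xi B_g, B_g).
\]
The second term vanishes by the previous step, so
\[
\L_\xi |B_g|^2 = (\L_\xi\delta)(B_g, B_g).
\]
This is the pivotal simplification: on the Euclidean side $(\L_\xi\delta)$ remains nontrivial (because $\xi$ is not a Killing field of $\delta$), but the Euclidean inner product still obeys the Leibniz identity, and the only nontrivial contribution comes from the deformation tensor of $\xi$ on $\delta$.

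Finally, I would plug in the ansatz \eqref{gBqs},
\[
B_g = \frac{C(\psi)}{|\xi|_g^2}\,\xi + \frac{\sqrt{|g|}}{|\xi|_g^2}\,\nabla_g^\perp\psi,
\]
and expand using the $C^\infty$-bilinearity of $(\L_\xi\delta)$. Because $\L_\xi g = 0$ and $\L_\xi\psi = 0$, all of $|\xi|_g^2$, $\sqrt{|g|}$, $\psi$, and $C(\psi)$ are $\xi$-invariant scalars that can be pulled out as coefficients, so the expansion produces only the three terms $(\L_\xi\delta)(\xi,\xi)$, $(\L_\xi\delta)(\xi,\nabla_g^\perp\psi)$, and $(\L_\xi\delta)(\nabla_g^\perp\psi, \nabla_g^\perp\psi)$ weighted by appropriate powers of $C(\psi)$ and $|\xi|_g^{-2}$. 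Collecting a common factor of $C^2(\psi)/|\xi|_g^4$ produces the asserted identity.

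The only real obstacle is the first step — recognizing that \eqref{symm} immediately kills $\L_\xi B_g$; once that is in hand the rest is bookkeeping. The care point in the bookkeeping is tracking the $\sqrt{|g|}$ factor from the ansatz, which combines with the $|g|$ coming from the self-product term so that the coefficients line up into the stated form after factoring out $C^2(\psi)/|\xi|_g^4$.
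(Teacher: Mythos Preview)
Your argument is essentially the paper's own proof: both pivot on \eqref{symm} together with $\L_\xi g = 0$ to obtain $\L_\xi B_g = 0$, and then compute $\L_\xi |B_g|^2$ via the Leibniz rule. Your version is marginally more compact---you apply Leibniz once to $\delta(B_g,B_g)$ to land directly on $(\L_\xi\delta)(B_g,B_g)$---whereas the paper first expands $|B_g|^2$ explicitly, deduces $\L_\xi\nabla_g^\perp\psi=0$ from $\L_\xi B_g=0$, and then differentiates term by term; the content is identical.
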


\subsection{Auxiliary Lemmas}
We collect some calculations which are useful for the proofs of the other Lemmas in the following statement.
\begin{lemma}\label{auxlem}
  Fix a metric $g$. Let $\xi$ be a vector field with $|\xi| \neq 0$
  and let $\psi: T\to \mathbb{R}$ be a function satisfying $\L_\xi \psi = 0$.
  Fix a function $C:\mathbb{R}\to \mathbb{R}$.
  Then
\begin{align}\label{divform1}
\div_g \left(C(\psi) \frac{\xi}{|\xi|_g^2}  \right) &=  \frac{C(\psi)}{|\xi|_g^2} \bigg( \div_g \xi  -  \frac{1}{|\xi|_g^2}(\L_\xi g)(\xi, \xi)\bigg),\\ \label{divform2}
\div_g \left(\sqrt{|g|} \frac{\nabla^\perp_g\psi }{|\xi|_g^2}  \right) &= -\frac{\sqrt{|g|}}{|\xi|_g^4}(\L_\xi g)( \xi,\nabla_g^\perp \psi) +   \frac{1}{|\xi|_g^2} \L_{\nabla^\perp_g\psi } \sqrt{|g|},\\ \nonumber
\curl_g  \left(C(\psi) \frac{\xi}{|\xi|_g^2}  \right)  &=   \frac{1}{|\xi|_g^4} C(\psi)(\xi \cdot_g \curl_g \xi) \xi  + \frac{1}{|\xi|_g^2}\bigg(  \frac{C(\psi) }{|\xi|_g^2|\nabla_g\psi|_g^2}  (\L_\xi g)( \xi, \nabla_g \psi )- C'(\psi) \bigg) \nabla_g^\perp \psi \\\label{curlform1}
&\qquad\qquad   -   \frac{C(\psi) }{|\xi|_g^4|\nabla_g\psi|_g^2}    (\L_\xi g)( \xi, \nabla_g^\perp \psi ) \nabla_g \psi  \\
\curl_g \left(\sqrt{|g|} \frac{\nabla^\perp_g\psi }{|\xi|_g^2}  \right) &= \bigg(\div_g \left( \sqrt{|g|} \frac{\nabla_g \psi }{|\xi|_g^2} \right)+   \frac{\sqrt{|g|} }{|\xi|_g^4}(\L_\xi g)(\nabla_g \psi,\xi)  \bigg)\xi \\
&\qquad + \frac{\sqrt{|g|} }{|\xi|_g^4}  \Big(   \L_\xi g(\xi,\xi)- 2|\xi|_g^2 \frac{\L_\xi \sqrt{|g|}}{\sqrt{|g|}} +\frac{|\xi|_g^2}{|\nabla_g \psi|^2}(\L_\xi g)(\nabla_g \psi,\nabla_g \psi)\Big)  \nabla_g \psi\\
&\qquad\qquad
+  \frac{\sqrt{|g|}}{|\xi|_g^4|\nabla_g\psi|^2 }(\L_\xi g)(\nabla_g \psi,\nabla_g^\perp \psi)\nabla_g^\perp \psi. \label{curlform2}
\end{align}
\end{lemma}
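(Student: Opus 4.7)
The plan is to prove each of the four formulas by repeated application of the Leibniz-type product rules \eqref{product}, \eqref{magic1}, \eqref{magic3} from Lemmas \ref{lemgid} and \ref{lemmag}, combined with two algebraic observations driven by the hypothesis $\L_\xi \psi = 0$. First, for any scalar $f$ one has $\xi \cdot_g \nabla_g f = \xi^i \partial_i f = \L_\xi f$, so in particular $\xi \cdot_g \nabla_g \psi = 0$ and any term of the form $C'(\psi)\,\xi \cdot_g \nabla_g \psi$ drops. Second, applying the product rule \eqref{product1} to $g(\xi,\xi)$ and using $\L_\xi \xi = 0$ gives
\begin{equation}
\xi \cdot_g \nabla_g |\xi|_g^2 \;=\; \L_\xi |\xi|_g^2 \;=\; (\L_\xi g)(\xi, \xi),
\end{equation}
so the deformation tensor appears naturally whenever $\nabla_g|\xi|_g^2$ is paired with $\xi$. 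More generally, differentiating $g(V,\xi)$ along $\xi$ yields $g(\nabla_\xi V, \xi) = \L_\xi(V\cdot_g \xi) - g(V,\nabla_\xi \xi)$, which lets every Lie-bracket or one-sided covariant-derivative term that arises be rewritten symmetrically as components of $\L_\xi g$.

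For \eqref{divform1}, apply $\div_g(fX) = f\div_g X + X\cdot_g \nabla_g f$ with $f = C(\psi)$ and $X = \xi/|\xi|_g^2$; the $\nabla_g f$ contribution vanishes by the first observation, and $\div_g(\xi/|\xi|_g^2) = \div_g\xi/|\xi|_g^2 - \xi \cdot_g \nabla_g|\xi|_g^2/|\xi|_g^4$ is converted to the stated form by the second. For \eqref{divform2}, apply the same Leibniz rule with $f = 1/|\xi|_g^2$ and $X = \sqrt{|g|}\,\nabla_g^\perp\psi$, then use $\div_g(\sqrt{|g|}\,Y) = \L_Y\sqrt{|g|} + \sqrt{|g|}\div_g Y$ on the second factor. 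Applying \eqref{magic3} with $X=\xi$, $Y = \nabla_g\psi$ and \eqref{d2iszero} reduces $\div_g(\xi \times_g \nabla_g\psi)$ to $\nabla_g\psi\cdot_g\curl_g\xi$; pairing this with the $\nabla_g^\perp\psi\cdot_g\nabla_g|\xi|_g^2$ contribution via \eqref{magic2} applied to $X=\xi$ (and using $\nabla_g^\perp\psi\cdot_g \xi = 0$) collapses everything into the single deformation-tensor term $-\sqrt{|g|}(\L_\xi g)(\xi,\nabla_g^\perp\psi)/|\xi|_g^4$.

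For the two curl identities the idea is to decompose the result in the orthogonal basis $(\nabla_g\psi, \nabla_g^\perp\psi, \xi)$ provided by Lemma \ref{baselem}. For \eqref{curlform1}, \eqref{product} gives
\begin{equation}
\curl_g\!\bigl(C(\psi)\xi/|\xi|_g^2\bigr) \;=\; \nabla_g\!\bigl(C(\psi)/|\xi|_g^2\bigr)\times_g \xi \;+\; \tfrac{C(\psi)}{|\xi|_g^2}\curl_g \xi,
\end{equation}
so the first summand splits into a $C'(\psi)\nabla_g\psi\times_g\xi = -C'(\psi)\nabla_g^\perp\psi$ piece and a $(C/|\xi|_g^4)\,\nabla_g|\xi|_g^2\times_g\xi$ piece, whose $\nabla_g\psi$ and $\nabla_g^\perp\psi$ projections are converted to $\L_\xi g$ terms using the observations from paragraph one; the second summand contributes $(C\,\xi\cdot_g\curl_g\xi/|\xi|_g^4)\,\xi$ along $\xi$ with remainder decomposed in the basis. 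For \eqref{curlform2}, apply \eqref{product} with $f = \sqrt{|g|}/|\xi|_g^2$ and $X = \nabla_g^\perp\psi$, then use \eqref{magic1} with $X=\xi$, $Y=\nabla_g\psi$:
\begin{equation}
\curl_g(\xi\times_g \nabla_g\psi) \;=\; \xi\,\div_g\nabla_g\psi \;-\; \nabla_g\psi\,\div_g\xi \;+\; \L_{\nabla_g\psi}\xi.
\end{equation}
The coefficient of $\xi$ recombines with gradient contributions from $\nabla_g(\sqrt{|g|}/|\xi|_g^2)$ to assemble exactly $\div_g\bigl(\sqrt{|g|}\nabla_g\psi/|\xi|_g^2\bigr)$ plus the stated $(\L_\xi g)(\nabla_g\psi,\xi)$ correction, while the $\nabla_g\psi$ and $\nabla_g^\perp\psi$ coefficients pick up the remaining deformation-tensor combinations.

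The main obstacle I expect is the bookkeeping in \eqref{curlform2}: four distinct scalar combinations $(\L_\xi g)(\xi,\xi)$, $(\L_\xi g)(\nabla_g\psi,\nabla_g\psi)$, $(\L_\xi g)(\nabla_g\psi,\nabla_g^\perp\psi)$ and the logarithmic Jacobian derivative $\L_\xi\sqrt{|g|}/\sqrt{|g|}$ must appear with precisely the claimed coefficients. Matching them requires systematically trading $\L_{\nabla_g\psi}\xi$ and $\nabla_\xi \nabla_g\psi$ for symmetric $\L_\xi g$ expressions via $g(\nabla_\xi V,\xi) = -g(V,\nabla_\xi \xi)$ (valid whenever $V\cdot_g\xi = 0$, hence in particular for $V = \nabla_g\psi$ and $V = \nabla_g^\perp\psi$), and tracking Jacobian factors $\sqrt{|g|}$ through each product rule. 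The computation is mechanical once the frame is set up, but each scalar coefficient has to be verified independently.
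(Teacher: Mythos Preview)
Your proposal is correct and follows essentially the same route as the paper: product rules \eqref{product}, \eqref{magic1}, \eqref{magic3}, the relation $\L_\xi|\xi|_g^2=(\L_\xi g)(\xi,\xi)$, and decomposition in the orthogonal frame $(\nabla_g\psi,\nabla_g^\perp\psi,\xi)$ of Lemma~\ref{baselem}. The only organizational difference is that the paper packages the conversion of Lie-bracket terms into deformation-tensor terms via a single vector identity $\L_\xi\nabla_g f = \nabla_g\L_\xi f - (\L_\xi g)(\nabla_g f,\cdot)$ stated upfront (so $\L_{\nabla_g\psi}\xi = -\L_\xi\nabla_g\psi = (\L_\xi g)(\nabla_g\psi,\cdot)$ immediately), and in \eqref{curlform1} first rewrites $\nabla_g|\xi|_g^2\times_g\xi$ using \eqref{lastid} to obtain an exact cancellation of the raw $\curl_g\xi$ terms before projecting, whereas you project onto basis directions first and reduce componentwise; both routes yield the same result.
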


\begin{proof}
We will repeatedly use the product rule \eqref{product1} as well
 as the commutator identity
\be\label{commu}
\L_\xi \nabla_g f = \nabla_g \L_\xi f - ( \L_\xi g)( \nabla_g f, \cdot).
\ee

\noindent \textbf{Step 1: Identity \eqref{divform1}.} To prove \eqref{divform1} we note
\begin{align}
\div_g \left(C(\psi) \frac{\xi}{|\xi|_g^2}  \right) &= C(\psi) \frac{\div_g \xi}{|\xi|_g^2}   - |\xi|_g^{-4}C(\psi) \L_\xi |\xi|_g^{2}= \frac{C(\psi)}{|\xi|_g^2} \div_g \xi   - \frac{C(\psi)}{|\xi|_g^4} (\L_\xi g )(\xi,\xi),
\end{align}
using the product rule \eqref{product}.\\

\noindent \textbf{Step 2: Identity \eqref{divform2}.}
First note that
\be
\div_g \left(\sqrt{|g|} \frac{\nabla^\perp_g\psi }{|\xi|_g^2}  \right)=\sqrt{|g|}  \div_g \left(\frac{\nabla^\perp_g\psi }{|\xi|_g^2}  \right) + \frac{1}{|\xi|_g^2} \L_{\nabla^\perp_g\psi } \sqrt{|g|}.
\ee
Next we compute
\begin{align}
\div_g \left(\frac{\nabla^\perp_g\psi }{|\xi|_g^2}  \right)&= \frac{1}{|\xi|_g^2}\bigg(\div_g \nabla^\perp_g\psi +|\xi|_g^2 \L_{ \nabla^\perp_g\psi} |\xi|_g^{-2} \bigg)\\
&= \frac{1}{|\xi|_g^2}\bigg(\div_g( \xi \times_g \nabla_g \psi) -  |\xi|_g^{-2}(\xi \times_g \nabla_g \psi) \cdot_g \nabla_g |\xi|_g^{2}\bigg)\\
&= \frac{1}{|\xi|_g^2}\bigg(\L_{\curl_g\! \xi} \psi  -  \xi \cdot_g \curl_g \nabla_g \psi- |\xi|_g^{-2} ( \nabla_g |\xi|_g^{2}  \times_g \xi) \cdot_g \nabla_g \psi \bigg)\\
&= \frac{1}{|\xi|_g^2}\bigg(\L_{\curl_g\! \xi} \psi - |\xi|_g^{-2} ( \nabla_g |\xi|_g^{2}  \times_g \xi) \cdot_g \nabla_g \psi \bigg). \label{formc}
\end{align}
We now simplify the second term in the above.  First note the identity (which follows from \eqref{lastid})
\begin{align} \nonumber
\xi \times_g \curl_g \xi &= \frac{1}{2} \nabla_g |\xi|_g^2 - (\xi \cdot_g \nabla_g) \xi\\
 &=  \nabla_g |\xi|_g^2 - ((\xi \cdot_g \nabla_g) \xi+ \nabla_g \xi \cdot_g \xi)\\
 & =   \nabla_g |\xi|_g^2 - (\L_\xi g)\cdot_g \xi.
 \label{magicidentity}
\end{align}
so that
\begin{align}
\nabla_g |\xi|_g^2 \times_g \xi &= (\xi \times_g \curl_g \xi )\times_g \xi + ((\L_\xi g)\cdot_g\xi)\times_g\xi\\
&= |\xi|_g^2 \curl_g \xi - (\xi \cdot_g \curl_g \xi) \xi + ((\L_\xi g)\cdot_g \xi)\times_g\xi,\label{crossform}
\end{align}
where we have used the elementary identity
 \be
  (\xi \times_g \curl_g \xi ) \times_g \xi =   |\xi|_g^2 \curl_g \xi - (\xi \cdot_g \curl_g \xi) \xi.
  \label{xitimescurl}
 \ee
 Noting finally that
 \be\label{gswitch}
  (((\L_\xi g)\cdot_g \xi)\times_g\xi )\cdot_g \nabla_g\psi= ((\L_\xi g)\cdot_g \xi)\cdot_g (\xi\times_g \nabla_g\psi)=(\L_\xi g)( \xi,\nabla_g^\perp \psi),
 \ee
using that  $\L_\xi \psi=0$ we have
\begin{align}
 |\xi|_g^{-2} ( \nabla_g |\xi|_g^{2}  \times_g \xi)  \cdot_g \nabla_g \psi   &=\L_{\curl_g\! \xi} \psi + |\xi|_g^{-2} (\L_\xi g)( \xi,\nabla_g^\perp \psi).
\end{align}
Putting this together with \eqref{formc}, we obtain the identity \eqref{divform2}.
\\

\noindent \textbf{Step 3: Identity \eqref{curlform1}.}
To prove \eqref{curlform1}, we note
\begin{equation}
 \curl_g (C(\psi) \xi) = C'(\psi)\nabla_g \psi \times_g \xi + C(\psi) \curl_g \xi
 = -C'(\psi) \nabla_g^\perp \psi + C(\psi) \curl_g \xi.
\end{equation}
Using this formula and \eqref{magicidentity}, we find that
\begin{align}
\curl_g  \left(C(\psi) \frac{\xi}{|\xi|_g^2}  \right) &= \frac{1}{|\xi|_g^2}\bigg( -C'(\psi) \nabla_g^\perp \psi + C(\psi) \curl_g \xi\bigg) -  |\xi|_g^{-4}  C(\psi) \nabla |\xi|_g^2 \times_g \xi\\
&= \frac{1}{|\xi|_g^2}\bigg( -C'(\psi) \nabla_g^\perp \psi + C(\psi) \curl_g \xi\bigg)\\
&\qquad -  |\xi|_g^{-4}  C(\psi)\bigg(  |\xi|_g^2 \curl_g \xi - (\xi \cdot_g \curl_g \xi) \xi+  (\L_\xi g)\cdot \xi \times_g \xi\bigg)
\\
&= -\frac{C'(\psi)}{|\xi|_g^2} \nabla_g^\perp \psi + |\xi|_g^{-4}  C(\psi)\bigg(  (\xi \cdot_g \curl_g \xi) \xi-  (\L_\xi g)\cdot \xi \times_g \xi\bigg).
\label{curlgCpsi}
\end{align}
Note finally using  Lemma \ref{baselem} that
\begin{align}
(\L_\xi g)\cdot_g \xi \times_g \xi&=((\L_\xi g)\cdot_g \xi \times_g \xi) \cdot_g\widehat{ \nabla_g \psi}\ \widehat{\nabla_g \psi }+ ((\L_\xi g)\cdot_g \xi \times_g \xi) \cdot_g \widehat{\nabla_g^\perp \psi} \ \widehat{\nabla_g^\perp \psi }\\
&= \frac{1}{|\nabla_g\psi|_g^2}  (\L_\xi g)( \xi, \nabla_g^\perp \psi ) \nabla_g \psi - \frac{1}{|\nabla_g\psi|_g^2}  (\L_\xi g)( \xi, \nabla_g \psi ) \nabla_g^\perp \psi,
\end{align}
where we used the identity \eqref{gswitch} in passing to the second line together with
 \be\label{gswitch2}
  (((\L_\xi g)\cdot_g \xi)\times_g\xi )\cdot_g \nabla_g^\perp\psi= ((\L_\xi g)\cdot_g \xi)\cdot_g (\xi\times_g  \nabla_g^\perp\psi)=- |\xi|_g^2 (\L_\xi g)( \xi,\nabla \psi).
 \ee
Combining this with \eqref{curlgCpsi} gives
\begin{align}
\curl_g  \left(C(\psi) \frac{\xi}{|\xi|_g^2}  \right)
&=  -\frac{C'(\psi)}{|\xi|_g^2} \nabla_g^\perp \psi + |\xi|_g^{-4}  C(\psi)  (\xi \cdot_g \curl_g \xi) \xi \\
&\qquad+ \frac{1}{|\xi|_g^2}   \bigg(  \frac{C(\psi)}{|\xi|_g^2|\nabla_g\psi|_g^2}  (\L_\xi g)( \xi, \nabla_g \psi ) \nabla_g^\perp \psi - \frac{C(\psi)}{|\xi|_g^2|\nabla_g\psi|_g^2}  (\L_\xi g)( \xi, \nabla_g^\perp \psi ) \nabla_g \psi \bigg).
\end{align}
Rearrangement establishes \eqref{curlform1}.
\\

\noindent \textbf{Step 4: Identity \eqref{curlform2}.} First note that
\begin{align}
\curl_g \left(\sqrt{|g|} \frac{\nabla^\perp_g\psi }{|\xi|_g^2}  \right) &= \sqrt{|g|} \curl_g \left(\frac{\nabla^\perp_g\psi }{|\xi|_g^2}  \right) + \frac{1}{|\xi|_g^2} \nabla  \sqrt{|g|} \times_g \nabla^\perp_g\psi\\
&= \sqrt{|g|} \curl_g \left(\frac{\nabla^\perp_g\psi }{|\xi|_g^2}  \right) + \frac{1}{|\xi|_g^2}  (\L_{\nabla_g \psi} \sqrt{|g|}) \xi -  \frac{1}{|\xi|_g^2}  (\L_{\xi} \sqrt{|g|}) \nabla \psi.
\end{align}
 Now, by the identity \eqref{magic1},
\begin{align}
  \curl_g \nabla^\perp_g \psi =   \curl_g (\xi\times_g \nabla_g \psi) &=
  \xi \Delta_g \psi - \nabla_g \psi \div_g \xi
  + \L_{\nabla_g \psi}\xi\\
  &
  =
  \xi \Delta_g \psi - \nabla_g \psi \div_g \xi
  - \L_{\xi}\nabla_g \psi\\
    &=
  \xi \Delta_g \psi - \nabla_g \psi \div_g \xi
  + (\L_\xi g)(\nabla_g \psi,\cdot),
  \label{identity1}
\end{align}
where we used \eqref{commu} and $(\L_\xi g)(\nabla_g \psi,\cdot)$ is defined as in \eqref{abuse}.
Therefore
\begin{align}
\sqrt{|g|} \curl_g \left(\frac{\nabla^\perp_g\psi }{|\xi|_g^2}  \right) &=   \frac{\sqrt{|g|} }{|\xi|_g^2}\Big( \xi \Delta_g \psi - \div_g \xi\  \nabla_g \psi
  +  (\L_\xi g)(\nabla_g \psi,\cdot)\Big) -  \frac{\sqrt{|g|} }{|\xi|_g^4} \nabla_g |\xi|_g^2 \times \nabla^\perp_g\psi \\
  &=   \frac{\sqrt{|g|} }{|\xi|_g^2}\Big( \xi \Delta_g \psi - \div_g \xi\  \nabla_g \psi
 +  (\L_\xi g)(\nabla_g \psi,\cdot)\Big) \\
  &\qquad +  \frac{\sqrt{|g|} }{|\xi|_g^4}\bigg( ( \L_\xi  |\xi|_g^2)   \nabla_g\psi - (  \nabla_g\psi\cdot_g \nabla_g |\xi|_g^2 ) \xi \bigg)\\
    &=   \sqrt{|g|} \div_g \left( \frac{\nabla_g \psi }{|\xi|_g^2} \right)  \xi
 +  \frac{\sqrt{|g|} }{|\xi|_g^2}(\L_\xi g)(\nabla_g \psi,\cdot) + \frac{\sqrt{|g|} }{|\xi|_g^4}  \Big( \L_\xi g(\xi,\xi)-  |\xi|_g^2 \div_g \xi  \Big)  \nabla_g \psi\\
     &=   \div_g \left( \sqrt{|g|} \frac{\nabla_g \psi }{|\xi|_g^2} \right)  \xi - \frac{1}{|\xi|_g^2} (\L_{\nabla_g \psi} \sqrt{|g|} ) \xi\\
     &\qquad
 +  \frac{\sqrt{|g|} }{|\xi|_g^2}(\L_\xi g)(\nabla_g \psi,\cdot) + \frac{1}{|\xi|_g^4}  \Big(\sqrt{|g|}  \L_\xi g(\xi,\xi)-  |\xi|_g^2 \L_{\xi}  \sqrt{|g|}  \Big)  \nabla_g \psi,
\end{align}
where we \eqref{divconv} to say $  \sqrt{|g|} \div_g\xi = \L_{\xi}  \sqrt{|g|}$ as well as  the identity
\be
 \nabla_g |\xi|_g^2 \times \nabla^\perp_g\psi :=  \nabla_g |\xi|_g^2 \times (\xi \times_g \nabla_g \psi)= (\nabla_g |\xi|_g^2 \cdot_g \nabla_g \psi) \xi -  (\nabla_g |\xi|_g^2 \cdot_g \xi) \nabla_g \psi.
\ee
Finally, note that we can express
\begin{align}
(\L_\xi g)(\nabla_g \psi,\cdot) &= \frac{1}{|\xi|_g^2} (\L_\xi g)(\nabla_g \psi,\xi ) \xi +  \frac{1}{|\xi|_g^2|\nabla_g \psi|_g^2 } (\L_\xi g)(\nabla_g \psi,\nabla^\perp_g\psi) \nabla^\perp_g\psi\\
&\qquad +  \frac{1}{|\nabla_g \psi|_g^2 } (\L_\xi g)(\nabla_g \psi,\nabla_g \psi) \nabla_g\psi.
\end{align}
This completes the derivation.
\end{proof}

\subsection{Proof of Lemma \ref{propBlem1}}\label{propBlem1pfsec}  The result follows from direct computation
as follows.
\\

\noindent \textbf{Step 1: Identity \eqref{fluxbg1}.}
The property of having a flux function \eqref{fluxbg1} follows from
Lemma \ref{timestimesglem}.
\\

\noindent \textbf{Step 2: Identity \eqref{divergencefree}.}
For the divergence \eqref{divergencefree}, Lemma \ref{propBLem} gives
\be\label{gdivB}
    \div_g B_g =  \frac{1}{|\xi|_g^2} \bigg( C(\psi)\div_g \xi  - (\L_\xi g)(\xi, B_g)\bigg)  +   \frac{1}{|\xi|_g^2} \L_{\nabla^\perp_g\psi } \sqrt{|g|}.
    \ee
Next recall the relation between the divergence on flat and curved backgrounds
 \begin{equation}
  \div X = \div_g X - \frac{1}{\sqrt{|g|}} \L_X\sqrt{|g|}.
  \label{divconv}
 \end{equation}
Applying this identity to convert \eqref{gdivB} to the divergence using the Euclidean metric, we have
 \begin{align}
    \div B_g  &=     \div_g B_g- \frac{1}{\sqrt{|g|}} \L_{B_g}\sqrt{|g|}  = \frac{1}{|\xi|_g^2} \big(C(\psi) \div_g\xi - (\L_\xi g)(\xi, B)\big) - \frac{1}{\sqrt{|g|}}  \frac{C(\psi) }{|\xi|_g^2}  \L_{\xi}  \sqrt{|g|}.
    \end{align}
    Using  $\div \xi=0$ and \eqref{divconv} again we find $    \sqrt{|g|} \div_g\xi = \L_{\xi}  \sqrt{|g|},$
and get the claimed result.
\\

\noindent \textbf{Step 3: Identity \eqref{symm}.}
We have the identity
\begin{align}
\L_\xi B_g&:= \xi \cdot \nabla B_g - B_g\cdot \nabla \xi  \\
&= \curl(B_g \times \xi) +  (\div B_g) \xi  - (\div \xi) B_g= -\frac{1}{|\xi|_g^2} (\L_\xi g)(\xi, B_g) \xi,
\end{align}
and the result follows from \eqref{fluxbg1}, \eqref{divergencefree} and the
assumption $\div \xi = 0$.
\\

\noindent \textbf{Step 4: Identity \eqref{tang}.}
Let $\hat{n}$ be the unit outward normal vector to $\partial T$.  Then we have
\begin{equation}
 B_g\cdot  \hat{n} = \frac{1}{|\xi|_g^2} \sqrt{|g|} \  (\xi \times_g \nabla_g \psi)\cdot  \hat{n},
\end{equation}
since $  \xi \cdot  \hat{n}=0$ by assumption. Now, for any vector field $X$ and scalar function $f$ we have
\be
X\cdot \nabla f  = \delta_{ij} X^i \delta^{jk} \partial_k f = \delta_i^k X^i \partial_k f = g_{im} g^{km} X^i \partial_k f = g_{im} X^i (\nabla_g f)^m = X \cdot_g \nabla_g f.
\ee
As a result,  since $\psi$ is assumed constant on the boundary, we can choose $\hat{n} = \nabla \psi/|\nabla \psi|$ on the boundary and a standard vector identity shows that $ (\xi \times_g \nabla_g \psi)\cdot  \hat{n} =0.$

\subsection{Proof of Lemma \ref{propBLem}}

\begin{proof}
Direct computation shows
\be
|B_g|^2 = \frac{1}{|\xi|_g^4} \bigg[ C(\psi) |\xi|^2 +2C(\psi) \xi \cdot \nabla_g^\perp \psi + |\nabla_g^\perp \psi|^2 \bigg].
\ee
Since $\L_\xi g = 0$, from \eqref{symm} it follows that
 $\L_\xi B_g=0$.  Thus
we have
\be
\L_{\xi} \nabla_g^\perp \psi = \L_{\xi} \big(|\xi|_g^2 B_g-  C(\psi) \xi\big) = 0.
\ee
Using $\L_\xi|\xi|_g^2=0$, $\L_\xi \xi=0$, $\L_\xi \psi=0$, $\L_{\xi} \nabla_g^\perp \psi=0$ and  $\L_\xi |\xi|^2 = (\L_\xi \delta) (\xi ,\xi) $ completes the proof.
\end{proof}

\section{Explicit expression for the generalized Grad-Shafranov equation}\label{explicitGS}

Fix a domain $D$ in the $\{\Phi = 0\}$ half-plane and let $\xi$ be a vector
field whose orbits starting from $D$ are all periodic (with possibly different
period). Fix an arbitrary local coordinate system on $D$ and extend it to a
coordinate system $(x_1, x_2, x_3)$ on the
torus $T$ defined in \eqref{torusdef} by pulling back along the flow of $\xi$.
In these coordinates we have $\xi\cdot \nabla f = \frac{\pa}{\pa x^3} f$.
In this section we express the coefficients appearing in the
generalized Grad-Shafranov equation \eqref{gGS} in these coordinates.
The most complicated part of the calculation is contained in
the following lemma.

\begin{lemma}
  Let $g$ be an arbitrary metric on $T$ and let $(x_1, x_2, x_3)$
  be a coordinate system on $T$ as above. Then
  \begin{multline}
   \curl_g \xi \cdot_g \xi
   =
   |g|^{1/2} \big( (\pa_1 g_{23} - \pa_2 g_{13} ) (g^{11} g^{22} - (g^{12})^2)\\
   +
 (\pa_3 g_{13} - \pa_1 g_{33}) ( g^{21} g^{23} - g^{22} g^{13})
   + (\pa_3 g_{23} - \pa_2 g_{33}) ( g^{11} g^{23} - g^{12} g^{13})\big)
 \label{curlcdotidentityexplicit}
  \end{multline}
\end{lemma}

\begin{proof}
We use the formula
 $\curl_g \xi \cdot_g \xi = i_\xi (\curl_g \xi)_{\flat} = i_\xi
 (*_g d \alpha),$
where $*_g$ is the Hodge star in terms of $g$ and $\alpha = \xi_\flat$ denotes
the one-form which is dual to $\xi$ with respect to $g$. Explicitly
 $\alpha = \alpha_i dx^i = g_{ij}\xi^j dx^i = g_{i3} dx^i$.
We now compute the terms on the right-hand side of \eqref{curlcdotidentityexplicit} explicity and the main step
is computing $*_g d\alpha$.
Acting on two-forms, $*_g$ is defined by linearity and the rule
\begin{equation}
 *_g (dx^i\wedge d x^j) =
 |g|^{1/2} g^{ik} g^{j\ell} \epsilon_{k\ell m} d x^m,
 \label{}
\end{equation}
where $|g| = \det g$ and
$\epsilon_{k\ell m}$ is the Levi-Civita symbol.

Since in our coordinate system $\xi = \pa_{3}$ we have
 $i_\xi d x^m = d x^m(\pa_{3}) = \delta^{m3}$ and so
\begin{equation}
 i_\xi *_g (dx^i \wedge \rmd x^j)
 =
 |g|^{1/2} g^{ik} g^{j\ell} \epsilon_{k\ell 3}.
 \label{}
\end{equation}
A straightforward calculation shows
\begin{align}
 i_{\xi} *_g (dx^1\wedge  d x^2)
 = |g|^{1/2} \big( g^{11} g^{22} - (g^{12})^2\big),\\
 i_\xi *_g (dx^2 \wedge d x^3)
 = |g|^{1/2} \big( g^{21} g^{32} - g^{22}g^{31}\big),\\
 i_\xi *_g (dx^1 \wedge d x^3)
 = |g|^{1/2} \big( g^{11} g^{32} - g^{12} g^{31}\big).
 \label{}
\end{align}
Since
 $d \alpha = (\pa_1 \alpha_2 - \pa_2 \alpha_1) dx^1\wedge dx^2
 + (\pa_1 \alpha_3 - \pa_3 \alpha_1) dx^1 \wedge dx^3
 + (\pa_2 \alpha_3 - \pa_3 \alpha_2) dx^2\wedge dx^3,
 $ we have
\begin{align} \nonumber
  \curl_g \xi  \cdot_g \xi &= (\pa_1 \alpha_2 - \pa_2 \alpha_1) i_\xi *_g (dx^1\wedge  d x^2)
 - \pa_1 \alpha_3 i_\xi *_g (dx^1 \wedge d x^3) \nonumber
 + \pa_2 \alpha_3 i_\xi *_g (dx^2\wedge d x^3)\\ \nonumber
 &= |g|^{1/2} \big( (\pa_1\alpha_2 - \pa_2 \alpha_1)  (g^{11} g^{22} - (g^{12})^2)
 +(\pa_3 \alpha_1 - \pa_1 \alpha_3) ( g^{21} g^{23} - g^{22} g^{13})\\\nonumber
 &\qquad\qquad+ (\pa_3 \alpha_2 - \pa_2 \alpha_3) ( g^{11} g^{23} - g^{12} g^{13})\big)
\end{align}
which gives \eqref{curlcdotidentityexplicit} since $\alpha_i = g_{i3}$.
\end{proof}

The next lemma follows from the previous one and \eqref{gs}
after noting that $|\xi|^2 = g(\xi, \xi) = g_{33}$.
\begin{lemma}
  Fix a vector field $\xi$ and a metric $g$ with $\L_\xi g = 0$.
  Let $(x_1, x_2, x_3)$ be any
   coordinate system as in the statement of the previous Lemma.
   Then equation \eqref{gs} with $f = 0$ takes the form
   \begin{equation}
    \sum_{i,j=1}^{3}
    a^{ij}_{\xi,g} \pa_i \pa_j \psi
    + \sum_{i = 1}^3 b_{\xi,g}^i \pa_i \psi
    + G_{\xi,g}(x_1, x_2, x_3, C, \psi) + \frac{P'(\psi)}{\sqrt{|g|}} = 0,
    \label{}
   \end{equation}
   where
   \begin{align}
    a^{ij}_{\xi,g} &= \frac{\sqrt{|g|}}{g_{33}}
    g^{ij},\quad
    b^i_{\xi,g} =
     \sum_{j =1,2} \frac{\sqrt{|g|}}{g_{33}}\pa_j\big( \sqrt{|g|} g^{ij}\big)
    + g^{ij}\pa_j \bigg( \frac{\sqrt{|g|}}{g_{33}}\bigg),\quad   G_{\xi,g} &=\frac{C(\psi)}{ g_{33}}\left(\frac{C'(\psi)}{ \sqrt{|g|}}
    -
    \xi\cdot_g \curl_g \xi\right),
    \label{}
   \end{align}
   where $\xi\cdot_g\curl_g \xi$ is given by
   \eqref{curlcdotidentityexplicit}.
\end{lemma}

 \subsection*{Acknowledgments}   We thank A. Bhattacharjee, J. Burby,  A. Cerfon,  N. Kallinikos, M. Landreman, R. MacKay and G. Misio\l{}ek for  insightful discussions.   The work of PC was partially supported by NSF grant DMS-1713985 and by the
Simons Center for Hidden Symmetries and Fusion Energy  award \# 601960.
Research of TD was partially supported by
NSF grant DMS-1703997.  Research of DG was partially supported by
the Simons Center for Hidden Symmetries and Fusion Energy.

\end{document}